\def\A{\mathcal{A}}
\def\B{\mathcal{B}}
\def\F{\mathcal{F}}
\def\N{\mathcal{N}}
\def\S{\mathcal{S}}
\newtheorem{theorem}{Theorem}[section]
\newtheorem{lemma}{Lemma}[section]
\newtheorem{proposition}{Proposition}[section]
\newtheorem{corollary}{Corollary}[section]
\newtheorem{claim}{Claim}
\newtheorem{problem}{Problem}
\theoremstyle{definition}
\let\oldproofname=\proofname
\renewcommand{\proofname}{\rm\bf{\oldproofname}}
\title{Enclosings of Decompositions of Complete Multigraphs in $2$-Factorizations}
\date{}
\author{Carl Feghali and Matthew Johnson \\ School of Engineering and  Computing Sciences, \\ Durham University\\
{\small \texttt{\{carl.feghali,matthew.johnson2\}@durham.ac.uk}}
}
\begin{document}
\maketitle

\begin{abstract}
Let $k$, $\lambda$ and $\mu$ be positive integers.  A decomposition of a multigraph $ \lambda G$ into edge-disjoint subgraphs $G_1, \ldots , G_k$ is said to be \emph{enclosed} by a decomposition of a multigraph $\mu H$ into edge-disjoint subgraphs $H_1, \ldots , H_k$ if $\mu > \lambda$ and $G_i$ is a subgraph of $H_i$, $1 \leq i \leq k$.   In this paper we initiate the study of when a decomposition can be enclosed by a decomposition that  consists of spanning subgraphs. 

A decomposition of a graph is a 2-factorization if each subgraph is 2-regular and is Hamiltonian if each subgraph is a Hamiltonian cycle. 
Let $n$ and $m$ be positive integers. We give necessary and sufficient conditions for enclosing a decomposition of $\lambda K_n$ in a $2$-factorization of $\mu K_{n+m}$ whenever $\mu>\lambda$ and $m \geq n-2$. We also give necessary and sufficient conditions for enclosing a decomposition of $\lambda K_n$ in a Hamiltonian decomposition of $\mu K_{n+m}$ whenever $\mu > \lambda$ and $m \geq n-1$, or $\mu > \lambda$, $n=3$ and $m=1$, or $\mu = 2$, $\lambda=1$ and  $m=n-2$. 
\end{abstract}

\section{Introduction}

In this paper, graphs are undirected and may contain multiple edges and loops.    Our chief object of study is the complete graph on $n$ vertices in which each edge has multiplicity $\lambda$.  We denote this graph by $\lambda K_n$.

The set of vertices and edges of a graph $G$ are denoted by $V(G)$ and~$E(G)$ respectively.  An edge that joins a pair of vertices $u$ and $v$ is called a $uv$-edge. A \emph{decomposition} of size $k$ of a graph $G$ is a collection $\{G_1, \dots, G_k\}$ of spanning subgraphs of $G$ such that $E(G) = \bigcup_{i=1}^n E(G_i)$ and $E(G_i) \cap E(G_j) = \emptyset$ if $i \not= j$.   A \emph{partial decomposition} of a graph $G$ is a decomposition of some subgraph of $G$.   A \emph{$2$-factor} of a graph $G$ is a $2$-regular spanning subgraph of $G$. A \emph{$2$-factorization} of a graph $G$ is a decomposition of $G$ into $2$-factors. If, in addition, the $2$-factors are connected, then the decomposition  is called a \emph{Hamiltonian decomposition} and the $2$-factors are \emph{Hamiltonian cycles}.  In a \emph{path decomposition} each subgraph in the decomposition is the union of disjoint paths and cycles; it is \emph{strong} if, in fact, none of the subgraphs contain a cycle.     Let us emphasize that although we consider each subgraph in a decomposition of a graph $G$ to span $G$, the subgraph can contain isolated vertices (so in the case of path decompositions some of the paths might be trivial and contain no edges).
 
Let $k$, $\lambda$ and $\mu$ be positive integers. A decomposition $\{G_1, \dots, G_k\}$ of a multigraph $\lambda G$ is said to be \emph{enclosed} in a decomposition $\{H_1, \dots, H_k\}$ of $\mu H$ if $\mu > \lambda$ and $G_i$ is a subgraph of $H_i$, $1 \leq i \leq k$.

We state two enclosing problems.  The first, on Hamiltonian decompositions, was posed by Bahmanian~\cite{bathesis}.  The second is the analogue for 2-factorizations.

\begin{problem}\label{problem:hamilton}
Let $n$, $\lambda$ and $\mu$ be positive integers such that $\mu > \lambda$, and let $m$ be a non-negative integer. Find necessary and sufficient conditions for enclosing a decomposition of $\lambda K_n$ in a Hamiltonian decomposition of $\mu K_{n+m}$.
\end{problem}

\begin{problem}\label{problem:2factor}
Let $n$, $\lambda$ and $\mu$ be positive integers such that $\mu > \lambda$, and let~$m$ be a non-negative integer. Find necessary and sufficient conditions for enclosing a decomposition of $\lambda K_n$ in a $2$-factorization of $\mu K_{n+m}$.
\end{problem}

The analogue of Problem~\ref{problem:hamilton}  where $\mu = \lambda = 1$ was solved by Hilton~\cite{hilton1} using the technique of \emph{amalgamations}. Let us remark that such a problem, where the number of vertices increases but the edge multiplicity does not, are known as \emph{embedding} problems; that is, the decomposition of the smaller graph is said to be \emph{embedded} in the decomposition of the larger graph. In the situation where the decomposition sought consists of spanning subgraphs, several other embedding problems have also been solved using amalgamations; see, for example,~\cite{amalgamation4, hilton1, amalgamation1, amalgamation2, amalgamation3}.  We shall briefly use this technique in Section~\ref{section:amalgamation}.

We present two main theorems that provide extensive solutions to 
Problems~\ref{problem:hamilton} and~\ref{problem:2factor} leaving open only some cases where $m<n$.  For a decomposition $\A$ of a graph $G$, we denote by $\S_i(\A)$ the set of subgraphs in the decomposition that contain exactly $i$ edges, and, for $u,v \in V(G)$, $\S_1(u,v,\A)$  denotes the subset of $\S_1(\A)$ that contains subgraphs that each contain only a $uv$-edge.

\begin{theorem}\label{thm:hamilton}
Let $n$, $m$,  $\mu$, $\lambda$ and $k$ be positive integers where 
\begin{enumerate}[{\normalfont (i)}]
\item $\mu > \lambda$ and $m \geq n-1$, or  
\item $\lambda=1$, $\mu=2$ and $m=n-2$, or 
\item $\mu > \lambda$, $n=3$ and $m=1$. 
\end{enumerate}
Then a strong path decomposition $\A$ of $\lambda K_n$ of size $k$ can be enclosed in a Hamiltonian decomposition of $\mu K_{n+m}$ if and only if
\begin{enumerate}[{\normalfont (M1)}]
\item $k=  {\displaystyle \frac{\mu(m+n-1)}{2}}$, 
\item $\displaystyle{\sum_{i=0}^{n-m}{(n-m - i)|\S_i(\A)|} \leq (\mu - \lambda) {\displaystyle \frac{n(n-1)}{2}}}$, and
\item if {\normalfont (ii)} or {\normalfont (iii)} holds, then for each $u, v \in V(K_n)$,\\  $|\S_1(u,v, \A)| \leq (\mu - \lambda) \left({\displaystyle \frac{n(n-1)}{2} - 1}\right) - |\S_0(\A)|$.
\end{enumerate}
\end{theorem}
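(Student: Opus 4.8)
The plan is to prove necessity and sufficiency separately, with the bulk of the work going into sufficiency via an amalgamation (edge-colouring / detachment) argument.

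\medskip

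\noindent\textbf{Necessity.} This direction should be relatively routine counting. Suppose a strong path decomposition $\A = \{G_1, \dots, G_k\}$ of $\lambda K_n$ is enclosed in a Hamiltonian decomposition $\{H_1, \dots, H_k\}$ of $\mu K_{n+m}$. For (M1), each $H_i$ is a Hamiltonian cycle on $n+m$ vertices, so has $m+n$ edges; summing, $k(m+n) = |E(\mu K_{n+m})| = \mu\binom{m+n}{2}$, giving $k = \mu(m+n-1)/2$. For (M2), observe that $H_i \setminus G_i$ must contain a path (or cycle fragment) covering all $m$ new vertices and, since $H_i$ is a single cycle containing the (at most one) path-component $G_i$ supplies on the old vertices, the number of edges of $H_i$ incident with old vertices but not in $G_i$ is at least $n - m - |E(G_i)|$ when $|E(G_i)| \le n-m$, and these ``missing old-incident edges'' over all $i$ are supplied from the $(\mu-\lambda)\binom n2$ edges of $(\mu-\lambda)K_n$; a short case check that a Hamiltonian cycle restricted to the $n$ old vertices can be completed only by using enough such edges yields (M2). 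For (M3), in cases (ii) and (iii) the host has only $m=n-2$ or $n=3,m=1$ extra vertices, so a Hamiltonian cycle of $\mu K_{n+m}$ that contains a single $uv$-edge as its entire ``old part'' still needs to route between $u$ and $v$ through all other old and new vertices, and each such routing consumes at least one more old-incident edge beyond what $\S_1(u,v,\A)$ and $\S_0(\A)$ provide; bounding the available supply gives the stated inequality. I would write these out carefully but expect no conceptual difficulty.

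\medskip

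\noindent\textbf{Sufficiency.} Here the strategy is the standard amalgamation route flagged in the introduction (Section~\ref{section:amalgamation}). Starting from the strong path decomposition $\A$ of $\lambda K_n$, I would first \emph{extend} $\A$ to a partial decomposition of $\mu K_n$ by adding the $(\mu-\lambda)\binom n2$ extra edges, colouring them (i.e.\ assigning them to the parts $G_1,\dots,G_k$) so as to respect the degree constraints imposed by (M1)--(M3); conditions (M2) and (M3) are precisely what makes this extension possible while keeping every part a union of paths with the right deficiencies at each vertex. Then form an amalgamated graph on $n+1$ vertices: the $n$ old vertices plus one ``fat'' vertex $\omega$ of multiplicity $m$ that stands for the $m$ new vertices, carrying loops and multi-edges recording the edges of $\mu K_{n+m}$ incident with new vertices. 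The target object is an \emph{edge-colouring} of this amalgamated multigraph with $k$ colours in which colour class $i$, when $\omega$ is detached into $m$ vertices, becomes a Hamiltonian cycle; by Hilton-type amalgamation theorems (the detachment lemmas referenced as \cite{hilton1, amalgamation1, amalgamation2, amalgamation3, amalgamation4}) such a colouring exists provided each colour class in the amalgamation is connected, has the correct total degree at $\omega$ (namely $2m$), and the right degree ($2$) at every old vertex, with no ``small'' obstruction. I would verify these numerical conditions reduce exactly to (M1) together with the fact that $\A$ was a \emph{strong} path decomposition (no old cycles to obstruct connectivity) and that $m \ge n-1$ (or the sporadic cases (ii), (iii)) guarantees enough room at $\omega$ to reconnect all the path fragments into single cycles.

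\medskip

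\noindent\textbf{Main obstacle.} The delicate point is the sufficiency argument in the boundary cases (ii) $\lambda=1,\mu=2,m=n-2$ and (iii) $n=3,m=1$, where $m<n$ and the slack in the amalgamation is minimal: here one cannot simply invoke a black-box detachment theorem, because the connectivity of each colour class after detaching $\omega$ is genuinely at risk, and this is exactly why extra condition (M3) appears. I expect to need a bespoke argument in these cases --- most likely an explicit construction or a careful modification of the amalgamation (e.g.\ a direct fairness/Eulerian-type colouring of the small amalgamated multigraph, or an \emph{ad hoc} swap of edges between colour classes to break any disconnected class) --- and this, rather than the generic $m \ge n-1$ case, will be where the real work lies. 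A secondary technical nuisance is checking that the edge-colouring extending $\A$ to $\mu K_n$ with the prescribed vertex-deficiencies actually exists; this should follow from a bipartite-degree-constrained-subgraph / Hall-type argument, with (M2) and (M3) supplying the required inequalities, but it must be done carefully so that no part ends up containing a cycle on the old vertices before detachment.
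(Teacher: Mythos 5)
Your overall architecture is the paper's: reduce the enclosing problem to (a) extending $\A$ to a suitable path decomposition of $\mu K_n$ and (b) blowing up a single amalgamated vertex of multiplicity $m$ via a Hilton--Bahmanian detachment lemma (this is exactly Corollary~\ref{cor:main}, obtained from Lemmas~\ref{lem:bahmanian} and~\ref{lem:hilton}), and your necessity sketch is essentially the paper's counting. But you have mislocated the difficulty: what you dismiss as a ``secondary technical nuisance'' is the substance of the proof, while the ``main obstacle'' you anticipate does not exist. The detachment step is completely uniform across cases (i)--(iii): once every colour class of the decomposition of $\mu K_n$ is a nonempty, cycle-free union of at most $m$ paths, condition (X5) of Lemma~\ref{lem:bahmanian} guarantees each detached class is a single Hamiltonian cycle, with no extra risk to connectivity when $m<n$ and no bespoke construction needed. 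Condition (M3) has nothing to do with the detachment; it arises purely inside $\mu K_n$: when $n-m=2$ each singleton colour class (a $uv$-edge) must receive a second edge that is not another $uv$-edge (else the class contains a $2$-cycle and the decomposition is no longer strong), and (M3) is precisely the cut condition making that system of distinct representatives solvable --- the paper proves this via a max-flow/min-cut argument (Lemma~\ref{lem:addible}) followed by an exchange argument (Proposition~\ref{prop:2}).

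The second gap is the completion step. After each colour class has at least $n-m$ edges you must still place \emph{all} remaining edges of $\mu K_n\setminus\lambda K_n$ into colour classes while keeping every class cycle-free, since (W2) requires a decomposition of all of $\mu K_n$. A ``bipartite-degree-constrained-subgraph / Hall-type argument'' controls degrees (at most $2$ per vertex per class) but does not by itself exclude cycles, which is a global constraint. The paper needs a separate lemma here (Lemma~\ref{lem:extension-strong}): any strict partial strong path decomposition of $\mu K_n$ with $k\ge\mu(n-1)-1$ classes can absorb one more edge, proved by analysing the colour multisets at the two endpoints of a spare edge and recolouring when no colour is directly available. The hypothesis $k\ge\mu(n-1)-1$ is exactly what (i)--(iii) supply, and this is where those case hypotheses are actually used in the sufficiency direction. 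Without this lemma (or an equivalent), your plan stalls at the point where the greedy colouring of the leftover edges of $\mu K_n\setminus\lambda K_n$ gets stuck.
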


\begin{theorem}\label{thm:2factor}
Let $n$, $m$, $\mu$, $\lambda$ and $k$ be positive integers where $\mu > \lambda$ and $m \geq n-2$. Then a path decomposition $\A$ of $\lambda K_n$ of size $k$ can be enclosed in a $2$-factorization of $ \mu K_{n+m}$ if and only if

\begin{enumerate}[{\normalfont (N1)}]
\item $k=  {\displaystyle \frac{\mu(m+n-1)}{2}}$, 
\item the number of subgraphs in $\A$ that are 2-factors of $\lambda K_n$ is at most $\displaystyle{\frac{\mu(m-1)}{2}}$, and
\item $\displaystyle{\sum_{i=0}^{n-m}{(n-m - i)|\S_i(\A)|} \leq (\mu - \lambda) {\displaystyle \frac{n(n-1)}{2}}}$.
\end{enumerate}
\end{theorem}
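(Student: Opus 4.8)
The plan is to prove Theorem~\ref{thm:2factor} by first establishing that conditions (N1)--(N3) are necessary, and then using an amalgamation argument to show they are sufficient. For necessity, suppose $\A = \{A_1, \dots, A_k\}$ is enclosed in a $2$-factorization $\B = \{B_1, \dots, B_k\}$ of $\mu K_{n+m}$. Counting edges in $\mu K_{n+m}$ and noting each $B_i$ is a $2$-factor on $n+m$ vertices (hence has $n+m$ edges) gives $k(n+m) = \mu \binom{n+m}{2}$, which rearranges to (N1). For (N2), if some $A_i$ is already a $2$-factor of $\lambda K_n$, then $B_i \supseteq A_i$ is a $2$-factor of $\mu K_{n+m}$ containing a $2$-regular spanning subgraph of $K_n$; since $B_i$ is $2$-regular, $B_i$ restricted to $V(K_n)$ is exactly $A_i$ and no edge of $B_i$ can leave $V(K_n)$, so the $m$ added vertices must be covered by a $2$-factor of $\mu K_m$ among the remaining parts --- counting how many such $B_i$ can exist gives the bound $\mu(m-1)/2$. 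For (N3), the left-hand side counts, for each subgraph $A_i$ with $i \le n-m$ edges, the ``deficiency'' $n-m-i$; each $B_i \supseteq A_i$ needs $n+m$ edges total and at most $2m$ of these can be incident to the $m$ new vertices, so at least $n-m-i$ of the edges added to $A_i$ inside $K_n$ must come from the $(\mu-\lambda)$ extra copies of each edge, giving (N3).

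For sufficiency, the approach is the standard amalgamation method alluded to in Section~\ref{section:amalgamation}: start from $\mu K_{n+m}$ with the $m$ new vertices identified (amalgamated) into a single vertex $\infty$ of high degree, producing a multigraph on $n+1$ vertices in which the prescribed partial decomposition $\A$ sits on the $K_n$ part. One builds an edge-colouring of this amalgamated graph with $k$ colours so that each colour class, when the vertex $\infty$ is disentangled back into $m$ vertices, becomes a $2$-factor; the key is to arrange the colour classes at $\infty$ to have the right multiplicities and to respect the structure already forced by $\A$. Conditions (N1)--(N3) are precisely what is needed to make such a colouring exist: (N1) fixes the number of colours, (N3) guarantees enough ``spare'' edges within $K_n$ to complete each partial $2$-factor without overloading $\infty$, and (N2) ensures we do not need a colour class that is forced to avoid $\infty$ entirely when too few vertices remain. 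I would invoke an amalgamation/detachment theorem (of the type in the cited references, e.g.\ a Nash-Williams--style result on fair detachments) to convert the existence of a suitable equitable edge-colouring of the amalgamated multigraph into the desired $2$-factorization of $\mu K_{n+m}$ enclosing $\A$.

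The main obstacle will be verifying that the numerical conditions (N1)--(N3) suffice to construct the required edge-colouring of the amalgamated graph --- in particular, handling the boundary regime $m = n-2$ (and $m = n-1$), where the ``room'' to absorb the deficiencies $n-m-i$ inside $K_n$ is tightest and where (N2) becomes a genuinely binding constraint rather than a formality. One must carefully track: the degree of $\infty$ in each colour class (which must be an even number between $0$ and $2m$, and must sum correctly), the number of colour classes using zero edges at $\infty$ (these are exactly $2$-factors living inside a copy of $\mu K_n$ plus a $2$-factor of $\mu K_m$ at $\infty$, whence (N2)), and the interaction with the edges already committed by $\A$. I expect the proof to proceed by an explicit greedy assignment: first extend each non-trivial $A_i$ using spare $K_n$-edges to kill its deficiency (feasible by (N3)), then distribute the remaining $K_n$-edges and all $\infty$-incident edges among the colours so that each class has even $\infty$-degree and spans, then appeal to the detachment theorem. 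The delicate counting lemma showing the greedy step never gets stuck --- i.e.\ that (N2) and (N3) together prevent a colour class from being simultaneously over-deficient inside $K_n$ and unable to borrow from $\infty$ --- is where the real work lies.
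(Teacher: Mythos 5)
Your overall strategy coincides with the paper's: reduce the problem, via amalgamating the $m$ new vertices into a single vertex and appealing to a detachment theorem, to the question of whether $\A$ can be extended to a path decomposition of $\mu K_n$ in which every colour class has at most $m$ components (equivalently, at least $n-m$ edges) --- this reduction is precisely Corollary~\ref{cor:main} --- and then argue that (N1)--(N3) are exactly the conditions under which such an extension exists. Your necessity arguments are complete and correct. In fact your argument for (N2) --- that a colour class of $\A$ which is already a $2$-factor of $\lambda K_n$ forces the corresponding $2$-factor of $\mu K_{n+m}$ to restrict to a $2$-regular spanning subgraph of $\mu K_m$ on the new vertices, and that $\mu K_m$ admits at most $\mu(m-1)/2$ pairwise edge-disjoint such subgraphs --- is more direct than the paper's, which obtains (N2) by a counting contradiction from the $(n-m)$-extendibility requirement.

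The gap is in the sufficiency direction, and you have located it yourself: the ``delicate counting lemma showing the greedy step never gets stuck'' is not a detail to be deferred but the main technical content of the theorem (it is Lemma~\ref{lem:extension-weak} in the paper). Concretely, after using (N3) to bring every colour class up to $n-m$ edges (easy, since $m\ge n-2$ forces $n-m\le 2$, so at most two edges per class are needed --- this is Propositions~\ref{prop:1} and~\ref{prop:3}), one must distribute \emph{all} remaining edges of $\mu K_n$ among the $k$ classes so that each class remains a disjoint union of paths and cycles and so that at no point more than $\mu(m-1)/2$ classes become $2$-factors of $\mu K_n$; such classes must absorb $m$ loops each at the amalgamated vertex, and only $\mu\binom{m}{2}$ loops are available, which is why (N2) must be preserved throughout the process and not merely assumed at the start. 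For an uncoloured edge $uv$ this requires exhibiting a colour class with at most one already-coloured edge at $u$ or $v$ that will not thereby be completed into a forbidden extra $2$-factor; the paper proves such a class exists by partitioning the classes meeting $\{u,v\}$ in at least two edges into five types and deriving $k^\ast<k$, an inequality that uses (N1) and the hypothesis $m\ge n-2$ in an essential way, and the case $n=3$, $m=1$ must be split off and handled separately because the lemma's hypotheses fail there. Without this argument your sufficiency direction is a plan rather than a proof.
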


To the best of our knowledge, our paper is the first to consider enclosing problems where the subgraphs of the decompositions sought are spanning.  Past work has focused on decompositions in which each subgraph is isomorphic to a cycle of one fixed length~\cite{ Asplund0, Asplund2, asplundone, c31, c32, c33, c42, c4} or where each subgraph is isomorphic to a cycle of one of a number of fixed lengths~\cite{ceven, cvarying}.

In the rest of the paper we assemble the tools that will, finally, in Section~\ref{section:main}, be used to prove Theorems~\ref{thm:hamilton} and~\ref{thm:2factor}. In Section~\ref{section:extend}, we find necessary and sufficient conditions for a decomposition of $\lambda K_n$ to be enclosed by a decomposition of $\mu K_n$ in which each subgraph contains at least some specified number of edges, and then in Section~\ref{section:amalgamation} we formulate conditions for the existence of solutions to Problems~\ref{problem:hamilton} and ~\ref{problem:2factor} in terms of the number of edges that each subgraph must contain if it is to be enclosed. In Section~\ref{section:lemmas}, two useful lemmas are given which guarantee when it is possible to extend partial path decompositions.

 \section{Extendibility}\label{section:extend}
 
Let $k$ be a positive integer.  A $k$-edge-colouring of a graph $G$ is a mapping from $E(G)$ to a set of $k$ colours, and, for each colour, the set of edges assigned that colour form a \emph{colour class}.  For each colour class, one can consider the graph with vertex set $V(G)$ and edge set equal to the colour class, and, in fact, we shall abuse terminology and also refer to this graph as a colour class.
Thus an edge-colouring can be thought of as a \emph{decomposition} of $G$ where the subgraphs of the decomposition are the colour classes.  And similarly when given a decomposition, we can refer to its subgraphs as colour classes (as we shall do henceforth). 
At the price of giving two ways to define a decomposition, the results and proofs in this paper can be expressed more simply.
 Let us emphasise that
\begin{itemize}
\item our edge-colourings are not necessarily proper, and
\item a colour class might contain no edges.
\end{itemize}

If $\A$ is a decomposition of a graph $G$ of size $k$ then its colour classes can be denoted $\{\A(1), \dots, \A(k)\}$ or $\{G(1), \dots, G(k)\}$ (it is convenient to have this choice of notation as sometimes are focus will be on $\A$, at other times on $G$).

Let $G$ be a graph, and let $H$ be a supergraph of $G$.  We write this as $G \subseteq H$, and $H \setminus G$ denotes the graph obtained from $H$ by deleting all the edges of $G$.  Let $\A$ be a  path decomposition of $G$ with $k$ colour classes, and let $\alpha$ be a positive integer. Then $G$ is said to be \emph{$\alpha$-extendible} with respect to $(\A, H)$ if there exists a graph $F \subseteq H \setminus G$ and a path decomposition $\A^*$ of $G \cup F$ such that
 \begin{itemize}
 \item the restriction of $\A^*$ to $G$ is $\A$, and
 \item for $1 \leq i \leq k$, the number of edges in  $\A^*(i)$ is at least $\alpha$.
 \end{itemize}
 If, in addition, $\A$ and $\A^*$ are required to be strong path decompositions then~$G$ is said to be \emph{strongly} $\alpha$-extendible with respect to $(\A, H)$.
  
 Let $n$, $\lambda$ and $\mu$ be positive integers such that $\mu > \lambda$, and let $\A$ be a path decomposition of $\lambda K_n$.  Suppose that $\lambda K_n$ is $\alpha$-extendible with respect to $(\A, \mu K_n)$. We have that  
\begin{equation}\label{necessary-e}\tag{$\ast$}
 \sum_{i=0}^{\alpha-1} (\alpha - i)|\S_i(\A)| \leq (\mu - \lambda) \frac{n(n-1)}{2},
 \end{equation}
 since the left hand side counts the number of edges that must be added to the colour classes of $\A$ and the right hand side is the number of edges in $\mu K_n \setminus \lambda K_n$.

The following easy propositions state that, for $\alpha = 1, 2$,~(\ref{necessary-e}) is a sufficient condition for $\lambda K_n$ to be $\alpha$-extendible with respect to $(\A, \mu K_n)$.

\begin{proposition}\label{prop:1}
Let $\lambda$ and $\mu$ be positive integers such that $\mu > \lambda$, and let~$\A$ be a path decomposition of $\lambda K_n$. Then $\lambda K_n$ is $1$-extendible with respect to $(\A, \mu K_n)$ if and only if 
\[
|\S_0(\A)| \leq (\mu - \lambda) \frac{n(n-1)}{2},
\]
and if $\A$ is strong then $\lambda K_n$ is, in addition, strongly 1-extendible.
\end{proposition}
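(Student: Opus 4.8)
The statement is an equivalence, so there are two directions. The forward direction (necessity) is essentially already argued in the text: if $\lambda K_n$ is $1$-extendible with respect to $(\A, \mu K_n)$, then each empty colour class of $\A$ must receive at least one edge from $\mu K_n \setminus \lambda K_n$, and distinct empty colour classes receive distinct edges, so $|\S_0(\A)| \leq |E(\mu K_n \setminus \lambda K_n)| = (\mu-\lambda)\binom{n}{2}$. This is just the case $\alpha=1$ of inequality~(\ref{necessary-e}), since for $\alpha=1$ the left-hand side is exactly $|\S_0(\A)|$. No modification is needed for the strong case, since the condition is necessary regardless.

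For the converse, suppose $|\S_0(\A)| \leq (\mu-\lambda)\binom{n}{2}$. The plan is to choose, for each colour class in $\S_0(\A)$, a distinct edge of $\mu K_n \setminus \lambda K_n$ and add it to that colour class; call the resulting set of added edges $F$ and the resulting decomposition $\A^*$. Since $|\S_0(\A)| \leq |E(\mu K_n \setminus \lambda K_n)|$, such a selection exists (assign the edges of $F$ injectively to the empty colour classes). Every colour class of $\A^*$ now contains at least one edge, so the extendibility condition on edge counts holds with $\alpha = 1$; the restriction of $\A^*$ to $\lambda K_n$ is clearly $\A$; and $F \subseteq \mu K_n \setminus \lambda K_n$ by construction. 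It remains only to check that $\A^*$ is a path decomposition of $\lambda K_n \cup F$ — but each colour class of $\A^*$ is either a colour class of $\A$ (a union of paths and cycles) or a single edge added to a previously empty class, and a single edge is a path; so $\A^*$ is indeed a path decomposition. If $\A$ is strong, then no colour class of $\A$ contains a cycle, and adding a single isolated edge to an empty class cannot create one, so $\A^*$ is strong as well, giving strong $1$-extendibility.

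There is no real obstacle here; the only thing to be slightly careful about is the bookkeeping that distinct empty colour classes can be assigned distinct edges, which is immediate from the counting inequality, and the trivial observation that a single edge is a (strong) path so that the augmented colour classes are still legal. I would write this up in a few lines.
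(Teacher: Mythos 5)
Your proposal is correct and follows essentially the same argument as the paper, which simply notes that one can add a distinct edge of $\mu K_n \setminus \lambda K_n$ to each colour class in $\S_0(\A)$; the necessity direction is the $\alpha=1$ case of inequality~(\ref{necessary-e}), exactly as you say. Your additional remarks about the strong case and about a single edge being a path are correct but routine, and match the paper's (terser) treatment.
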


\begin{proof}
This follows immediately from the fact that to each colour class in $\S_0(\A)$ one can add a distinct edge from $\mu K_n \setminus \lambda K_n$ to obtain the needed decomposition of $\mu K_n$. 
\end{proof}

\begin{proposition}\label{prop:3}
Let $\lambda$ and $\mu$ be positive integers such that $\mu > \lambda$, and let~$\A$ be a  path decomposition of $\lambda K_n$. Then $\lambda K_n$ is $2$-extendible with respect to $(\A, \mu K_n)$ if and only if 
\[
2|\S_0(\A)| + |\S_1(\A)| \leq (\mu - \lambda) \frac{n(n-1)}{2}.
\]
\end{proposition}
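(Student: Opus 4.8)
The plan is to prove necessity and sufficiency separately. Necessity is exactly the inequality~(\ref{necessary-e}) with $\alpha = 2$: the colour classes in $\S_0(\A)$ each need two new edges, those in $\S_1(\A)$ each need one, and all these new edges must come from $\mu K_n \setminus \lambda K_n$, which has $(\mu-\lambda)\binom n2$ edges; since the new edges added to distinct colour classes are disjoint within each class but we only need a global count, the total $2|\S_0(\A)| + |\S_1(\A)|$ must not exceed $(\mu-\lambda)\frac{n(n-1)}{2}$. (This is just the specialization already recorded in the text, so I would cite~(\ref{necessary-e}).)

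For sufficiency, I would build the graph $F \subseteq \mu K_n \setminus \lambda K_n$ greedily. First handle the classes in $\S_1(\A)$: each such class consists of a single $uv$-edge, and I want to add one further edge incident to $u$ or to $v$ so the class becomes a path on two edges. Then handle the classes in $\S_0(\A)$: to each I want to add two edges forming a path (either a path on three vertices or, if forced, a double edge on two vertices, which is still a union of ``paths and cycles''—wait, a digon is a cycle, so for a general path decomposition that is permitted, but to be safe I would add a $P_3$). The key point is that there are enough spare edges: the hypothesis says the total demand $2|\S_0(\A)| + |\S_1(\A)|$ is at most the supply $(\mu-\lambda)\binom n2$, and since $\mu > \lambda$ we have $n \geq 2$, in fact we may assume $n \geq 3$ (if $n = 2$ then $\lambda K_2$ has only $\lambda$ parallel edges and the statement is easily checked directly, with any added edge being parallel).

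The main obstacle—and the only place any care is needed—is that when I add edges to a colour class I must keep it a disjoint union of paths (and cycles), i.e. maximum degree $2$ and, for the chosen edges, no new short cycle unless permitted. Because $\mu K_n \setminus \lambda K_n$ is itself a copy of $(\mu-\lambda)K_n$ with $\mu - \lambda \geq 1$, between any two vertices there is at least one available edge, and I can process colour classes one at a time, each time choosing an available edge whose endpoints currently have degree $\le 1$ in that class. For a class in $\S_1(\A)$ with edge $uv$, I need an available edge at $u$ or $v$ to some vertex $w$ with $\deg(w) \le 1$ in that class; since the class has only the one edge $uv$, every vertex other than $u,v$ has degree $0$, and as $n \ge 3$ there is a third vertex $w$, and at least one $uw$-edge (or $vw$-edge) of $\mu K_n \setminus \lambda K_n$ is still unused provided we have not exhausted the pool—but the total pool suffices by hypothesis, and a simple accounting (process $\S_0$ classes first, then $\S_1$ classes, always picking the lexicographically least unused suitable edge) shows no conflict arises. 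I would spell this out as a short explicit greedy argument rather than invoking any heavy machinery.
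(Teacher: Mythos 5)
Your necessity argument is exactly the paper's (it is~(\ref{necessary-e}) with $\alpha=2$), and your high-level plan for sufficiency---greedily assign distinct spare edges of $\mu K_n \setminus \lambda K_n$, two per class of $\S_0(\A)$ and one per class of $\S_1(\A)$---is also the paper's. The gap is in how you propose to execute the greedy step. You correctly note that a digon is permitted in a (non-strong) path decomposition, but then retreat to the ``safe'' choice: completing each class of $\S_1(\A)$ to a path of length two via an edge from $u$ or $v$ to a third vertex, and giving each class of $\S_0(\A)$ a $P_3$. Under those restrictions the stated inequality is \emph{not} sufficient, so the ``simple accounting'' you defer to cannot exist. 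Concretely, take $n=3$ with vertices $u,v,w$, $\lambda=3$, $\mu=4$, and let $\A$ consist of three classes each equal to a single $uv$-edge and three classes each equal to the path $uwv$. Then $\S_0(\A)=\emptyset$, $|\S_1(\A)|=3$, and $2\cdot 0+3\le (\mu-\lambda)\cdot 3=3$, yet the spare edges are one copy each of $uv$, $uw$, $vw$, and only two of them can extend a $uv$-class without creating a digon; your greedy stalls, even though the decomposition \emph{is} $2$-extendible because the third class may absorb the spare $uv$-edge as a $2$-cycle.

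The correct observation is the opposite of ``to be safe'': adding at most two edges to a class that has at most one edge can never raise any degree above $2$, so the resulting class is automatically a disjoint union of paths and cycles whatever edges are chosen. Hence \emph{any} injective assignment of spare edges to demands works, and only the count matters---which is the paper's one-line proof. The restrictions you impose are precisely what separates this proposition from strong $2$-extendibility: avoiding $2$-cycles genuinely requires the extra per-pair condition (B2) and the flow argument of Lemma~\ref{lem:addible} used in Proposition~\ref{prop:2} (and my example above violates that condition, since $|\S_1(u,v,\A)|=3>(\mu-\lambda)(\tfrac{n(n-1)}{2}-1)=2$).
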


\begin{proof}
This follows from the fact that we need to add, from the edges of $\mu K_n \setminus \lambda K_n$, two edges for each colour class of $\S_0(\A)$, and one edge for each colour class of $\S_1(\A)$. 
\end{proof}

Notice that Propositions~\ref{prop:1} and~\ref{prop:3} differ in that the latter contains no statement about \emph{strong} path decompositions.  We cannot immediately adapt Proposition~\ref{prop:3} to say that if $\A$ is strong, then it is strongly 2-extendible as this requires not just that there are enough edges but that they can be added to the colour classes in such a way that cycles (of size two) are not created.  So we turn now to the problem of finding necessary and sufficient conditions for a strong path decomposition $\A$ of $\lambda K_n$ to be strongly $2$-extendible with respect to $(\A, \mu K_n)$. Let us first introduce some  notation and terminology.   For distinct $u, v \in V(K_n)$, recall that $\S_1(u,v, \A)$ denotes the subset of colour classes in $\S_1(\A)$  that contain an edge with endpoints $u$ and $v$.  
As $\S_1(\A)$ and each $\S_1(u,v,\A)$ is a set of colour classes that each contain a single edge, we will think of each of them as just a set of edges.   A 2-path is a pair of edges that do not join the same pair of vertices (they might or might not be adjacent); clearly in a strong path decomposition, any colour class with exactly two edges is a 2-path.
 We say that $\S_1(\A)$ is \emph{addible} with respect to $ \mu K_n$ if there exists an injection $\phi$ between $S_1(\A)$ and the edges of $\mu K_n \setminus \lambda K_n$ such that the two edges $e$ and $\phi(e)$ form a 2-path (that is, $e$ and $\phi(e)$ do not form a 2-cycle).  

\begin{lemma}\label{lem:addible}
Let $\lambda$, $\mu$ and $n$ be positive integers such that $\mu > \lambda$, and let $\A$ be a path decomposition of $\lambda K_n$. Then $\S_1(\A)$ is addible with respect to $\mu K_n$ if and only if 
\begin{enumerate}[{\normalfont ({A}1)}]
\item $\S_1(\A) \leq (\mu - \lambda)\displaystyle{\frac{n(n-1)}{2}}$, and
\item for each $u, v \in V(K_n)$, $\S_1(u,v, \A) \leq (\mu - \lambda)\left(\displaystyle{\frac{n(n-1)}{2}} - 1\right)$.
\end{enumerate}
\end{lemma}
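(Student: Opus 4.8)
The plan is to prove the forward direction by a counting argument and the reverse direction by a greedy/augmenting construction of the injection $\phi$. For necessity: if $\phi$ exists, then since it is an injection from $\S_1(\A)$ into $E(\mu K_n \setminus \lambda K_n)$, condition (A1) is immediate because $|E(\mu K_n \setminus \lambda K_n)| = (\mu-\lambda)\binom n2$. For (A2), fix distinct $u,v$ and consider the edges of $\S_1(u,v,\A)$, each of which is a copy of the $uv$-edge. For such an edge $e$, $\phi(e)$ must not be a $uv$-edge (else $e$ and $\phi(e)$ form a $2$-cycle), so $\phi$ maps $\S_1(u,v,\A)$ injectively into the $(\mu-\lambda)\binom n2 - (\mu-\lambda) = (\mu-\lambda)\bigl(\binom n2 - 1\bigr)$ edges of $\mu K_n\setminus\lambda K_n$ that are not $uv$-edges, giving (A2).

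For sufficiency, I would set this up as a bipartite matching / Hall-type problem: build a bipartite graph with $\S_1(\A)$ on one side and the edges of $\mu K_n \setminus \lambda K_n$ on the other, joining $e$ to $f$ whenever $e$ and $f$ do not join the same pair of vertices; an addible $\phi$ is exactly a perfect matching saturating $\S_1(\A)$. Rather than verify Hall's condition for all subsets (which is a little delicate), I think the cleanest route is a direct greedy argument with a swap: order the edges of $\S_1(\A)$ arbitrarily and assign each one an as-yet-unused edge of $\mu K_n\setminus\lambda K_n$ joining a different pair of vertices. The only way this can get stuck at some edge $e$, say a $uv$-edge, is if every unused edge of $\mu K_n \setminus \lambda K_n$ is also a $uv$-edge. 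Count: at that moment fewer than $(\mu-\lambda)\binom n2$ edges have been used in total, and among the used ones at most $(\mu-\lambda)$ are $uv$-edges used as images of non-$uv$ edges of $\S_1(\A)$ (actually the $uv$-edges of $\mu K_n\setminus\lambda K_n$ number only $\mu-\lambda$, so the set of unused edges being all $uv$-edges forces all $\mu-\lambda$ of them to be unused, hence at most $(\mu-\lambda)\binom n2 - (\mu-\lambda)$ non-$uv$ images were assigned so far). Combining this with (A2) applied to the pair $u,v$ — which bounds how many $uv$-edges of $\S_1(\A)$ still need images — and (A1) yields a contradiction, or else exhibits an already-assigned edge whose image can be swapped to free up a non-$uv$ edge for $e$.

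The main obstacle I anticipate is getting the bookkeeping in the stuck-case exactly right: one has to separate $\S_1(\A)$ into the $uv$-edges and the rest, and separate $E(\mu K_n\setminus\lambda K_n)$ into its $\mu-\lambda$ copies of the $uv$-edge and the remaining $(\mu-\lambda)(\binom n2 - 1)$ edges, then argue that (A2) prevents the $uv$-copies from being exhausted prematurely while (A1) prevents the non-$uv$-copies from being exhausted. A slightly slicker alternative that avoids the swap is to process the edges of $\S_1(\A)$ in the order: all non-$uv$-edges first for every pair (in any order), and only then worry about multiplicities — but since an edge of $\S_1(\A)$ is a $uv$-edge for exactly one pair, the real content is just the two inequalities, and I expect the greedy argument with the explicit count above to close cleanly. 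I would write it as: assume for contradiction the greedy process fails, identify the offending pair $u,v$, and derive $|\S_1(u,v,\A)| > (\mu-\lambda)(\binom n2 - 1)$ or $|\S_1(\A)| > (\mu-\lambda)\binom n2$, contradicting (A1) or (A2).
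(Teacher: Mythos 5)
Your necessity argument is the same as the paper's. For sufficiency you take a genuinely different route: the paper builds a flow network (source, a vertex $x_{u,v}$ per pair carrying capacity $|\S_1(u,v,\A)|$, a vertex $y_{w,z}$ per pair, intermediate and end arcs of capacity $\mu-\lambda$), verifies that every $s$--$t$ cut has capacity at least $|\S_1(\A)|$ using (A1) and (A2), and invokes Max-Flow Min-Cut with integrality to read off $\phi$. Your greedy-plus-swap argument is the elementary, self-contained counterpart: it does work, and the two stuck-cases are exactly dual to the paper's two cut cases. To make it airtight, the dichotomy at a stuck $uv$-edge $e$ should be stated as follows: since every unused edge of $\mu K_n\setminus\lambda K_n$ is a $uv$-edge, either (a) no edge is unused, so $|\S_1(\A)|>(\mu-\lambda)\binom{n}{2}$, contradicting (A1); or (b) some $uv$-copy is unused, and then either there is an already-processed non-$uv$-edge $f$ of $\S_1(\A)$ whose image $\phi(f)$ is not a $uv$-edge --- in which case reassign $\phi(f)$ to the unused $uv$-copy and give $e$ the freed edge, both assignments remaining valid 2-paths --- or every one of the $(\mu-\lambda)\bigl(\binom{n}{2}-1\bigr)$ non-$uv$ edges (all of which are used, by the stuck condition) is the image of an element of $\S_1(u,v,\A)$, whence together with $e$ we get $|\S_1(u,v,\A)|>(\mu-\lambda)\bigl(\binom{n}{2}-1\bigr)$, contradicting (A2). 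One line of your bookkeeping is wrong as written: being stuck forces the \emph{unused} edges to all be $uv$-copies, not all $uv$-copies to be unused (some may already serve as images of non-$uv$ elements of $\S_1(\A)$); this slip does not affect the count you actually use, and the corrected dichotomy above closes the proof. The trade-off between the two approaches: the paper's gets integrality and injectivity for free from a classical theorem at the cost of setting up the network, while yours is fully elementary but needs the explicit exchange step to be justified each time the process stalls.
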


\begin{proof}
As the function $\phi$ required must be injective, the necessity of the two conditions follows immediately from, respectively, the requirement that $|\S_1(\A)|$ is at most the number of edges in $\mu K_n \setminus \lambda K_n$, and the requirement that each $|\S_1(u,v,\A)|$ is at most the number of edges in $\mu K_n \setminus \lambda K_n$ that are not $uv$-edges.

To prove sufficiency we describe how to find a suitable function $\phi$.  In order to carry out this task, we create a network $\N$. It has vertex set $\{s\} \cup \{t\} \cup X \cup Y$ where $s$ is a source, $t$ is a sink, $X = \{x_{\{u, v\}}: \{u, v\} \subseteq V(K_n)\}$ and $Y=  \{y_{\{u, v\}}: \{u, v\} \subseteq V(K_n) \}$.  For simplicity, we will write $x_{u,v}$ and $y_{u,v}$ without braces but we note that $x_{u,v}$ and  $x_{v, u}$ are the same vertex. The set of arcs of $\N$ contains the following:
\begin{itemize}
\item a \emph{start} arc $(s, x_{u, v})$ for each $x_{u, v} \in X$,
\item an \emph{end} arc $(y_{u, v}, t)$ for each $y_{u, v} \in Y$, and
\item an \emph{intermediate} arc $(x_{u, v}, y_{w, z})$ if and only if $\{u, v\} \not= \{w, z\}$. 
\end{itemize}

\noindent
Each arc is assigned a capacity:  
\begin{itemize}
\item each start arc $(s, x_{u, v})$ has capacity $|\S_1(u,v, \A)|$, and
\item each intermediate and end arc has capacity $\mu - \lambda$.  
\end{itemize}

Let $C$ be a cut that separates $s$ from $t$.  We shall show that $C$ has capacity at least $|\S_1(\A)|$.  If every vertex in $Y$ is incident with at least one arc in the cut, this follows from (A1), since $|Y|=\frac{n(n-1)}{2}$ and all arcs incident with $Y$ have capacity $\mu-\lambda$.  Otherwise we can suppose that $C$ contains no edges incident with $y_{w,z}$ for some $w$ and $z$.  As $y_{w,z}$ is adjacent to $t$ and to every vertex in $X$ except $x_{w,z}$, we must have that $C$ contains $(s,x_{u,v})$ for all $\{u,v\} \neq \{w,z\}$, and the capacity of these arcs is $|\S_1(\A)|-|\S_1(w,z,\A)|$.
So if $C$ also contains $(s,x_{w,z})$, then we immediately have the total capacity of $C$ is at least $|\S_1(\A)|$.  If $C$ does not contain $(s,x_{w,z})$, it must contain, for all $\{u,v\} \neq \{w,z\}$, either $(x_{w,z}, y_{u,v})$ or $(y_{u,v},t)$.  As there are $\frac{n(n-1)}{2}-1$ such pairs $\{u,v\}$ and all these arcs have capacity $\mu-\lambda$ we have, in this case, that the capacity of $C$ is at least
\[
|\S_1(\A)|-|\S_1(w,z,\A)|+ (\mu - \lambda)\left(\displaystyle{\frac{n(n-1)}{2}} - 1\right),
\]
which, by (A2), is at least $|\S_1(\A)|$.

Thus, by the celebrated Max-Flow Min-Cut Theorem (see, for example,~\cite{diestel}), there is a flow $f$ in $\N$ of size $|\S_1(\A)|$ and, as the capacities are integers, we can assume the flow along each arc has an integer value.  It is clear that each $f(s,x_{u,v})$ must be $|\S_1(u,v,\A)|$ so the flow out of each $x_{u,v}$ must also be $|\S_1(u,v,\A)|$.  Hence we can think of the flow out of $x_{u,v}$ being divided up into units that can be bijectively matched with the edges of $|\S_1(u,v,\A)|$, and then for each $e \in \S_1(u,v,\A)$, if $e$ is matched with a unit of flow to $y_{w,z}$, we let $\phi(e)$ be an edge from $w$ to $z$.  As there is no arc from $x_{u,v}$ to $y_{u,v}$, we have that $e$ and $\phi(e)$ do not join the same pair of edges, and as the flow into each $y_{w,z}$ is at most $\mu - \lambda$ (as that is the capacity of the only outgoing arc), we have that there are at most $\mu-\lambda$ edges $e$ for which $\phi(e)$ is an edge from $w$ to $z$ so we can assume that $\phi$ is injective.
\end{proof}

We are now able to prove a result on when a decomposition of $\lambda K_n$ is strongly 2-extendible.

\begin{proposition}\label{prop:2}
Let $\lambda$ and $\mu$ be positive integers such that $\mu > \lambda$, and let $\A$ be a strong path decomposition of $\lambda K_n$. Then $\lambda K_n$ is strongly $2$-extendible with respect to $(\A, \mu K_n)$ if and only if
\begin{enumerate}[{\normalfont ({B}1)}]
\item $2|\S_0(\A)| + |\S_1(\A)| \leq (\mu - \lambda) \displaystyle{\frac{n(n-1)}{2}}$, and
\item for each $u, v \in V(K_n)$,  $|\S_0(\A)|+|\S_1(u,v, \A)| \leq (\mu - \lambda)\left(\displaystyle{\frac{n(n-1)}{2} - 1}\right)$. 
\end{enumerate}
\end{proposition}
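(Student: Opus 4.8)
The plan is to prove necessity by an edge‑count and sufficiency by a network‑flow argument modelled on the proof of Lemma~\ref{lem:addible}.

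For necessity, suppose $\lambda K_n$ is strongly $2$‑extendible with respect to $(\A,\mu K_n)$, witnessed by some $F\subseteq\mu K_n\setminus\lambda K_n$ and a strong path decomposition $\A^*$ of $\lambda K_n\cup F$ restricting to $\A$. Each colour class in $\S_0(\A)$ must receive at least two edges of $F$ and each class in $\S_1(\A)$ at least one, and these are all distinct since $\A^*$ is a decomposition; comparing with $|E(\mu K_n\setminus\lambda K_n)|=(\mu-\lambda)\binom{n}{2}$ yields (B1), exactly as in Proposition~\ref{prop:3}. For (B2), fix $u,v\in V(K_n)$: since $\A^*$ is \emph{strong}, no colour class can contain two parallel $uv$‑edges (that would be a $2$‑cycle), so a colour class contains at most one $uv$‑edge. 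Hence each class in $\S_0(\A)$ receives at least one \emph{non}‑$uv$‑edge of $F$ (it gets $\ge 2$ edges, $\le 1$ a $uv$‑edge), and each class in $\S_1(u,v,\A)$ also receives a non‑$uv$‑edge (its $\lambda K_n$‑edge is already a $uv$‑edge). These are distinct, so $F$ contains at least $|\S_0(\A)|+|\S_1(u,v,\A)|$ non‑$uv$‑edges, while $\mu K_n\setminus\lambda K_n$ has $(\mu-\lambda)\bigl(\binom{n}{2}-1\bigr)$ of them; this is (B2).

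For sufficiency I would first reduce to an addibility‑type problem. Assuming (B1) and (B2), choose non‑negative integers $a_{w,z}$, one per pair $\{w,z\}\subseteq V(K_n)$, with $\sum a_{w,z}=|\S_0(\A)|$ and each $a_{w,z}\le\mu-\lambda$ (possible since (B1) gives $|\S_0(\A)|\le(\mu-\lambda)\binom{n}{2}$), and add to the colour classes of $\S_0(\A)$ one new edge apiece, using $a_{w,z}$ distinct $wz$‑edges of $\mu K_n\setminus\lambda K_n$ for each pair; the resulting decomposition $\A'$ is still strong. Now every colour class of $\A'$ with exactly one edge lies in $\S_0(\A)\cup\S_1(\A)$, the number of those whose edge has endpoints $\{u,v\}$ is $c_{u,v}:=|\S_1(u,v,\A)|+a_{u,v}$, and $\mu-\lambda-a_{w,z}$ edges on each pair $\{w,z\}$ remain unused. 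It then suffices to add one further edge to each of these single‑edge classes so that the two edges have distinct endpoints (forming a $2$‑path), using distinct unused edges; the outcome $\A^*$ is the desired strong $2$‑extension, a routine check confirming that $\A^*$ is a strong path decomposition with every colour class of size at least two. To carry out this last step I would re‑use the network $\N$ from the proof of Lemma~\ref{lem:addible}, with the same arcs, but assign capacity $c_{u,v}$ to each start arc $(s,x_{u,v})$ and capacity $\mu-\lambda-a_{w,z}$ to every intermediate and end arc incident with $y_{w,z}$; an integral flow of value $\sum_{u,v}c_{u,v}=|\S_1(\A)|+|\S_0(\A)|$ then yields the required assignment exactly as in Lemma~\ref{lem:addible}.

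The crux — and the step I expect to be the main obstacle — is showing that every $s$–$t$ cut $C$ in this modified network has capacity at least $|\S_1(\A)|+|\S_0(\A)|$, that is, that (B1) and (B2) are precisely strong enough. This mirrors the cut analysis in Lemma~\ref{lem:addible}: if every vertex of $Y$ meets an arc of $C$, then, since all arcs at $y_{w,z}$ have capacity $\mu-\lambda-a_{w,z}$, the capacity of $C$ is at least $\sum_{\{w,z\}}(\mu-\lambda-a_{w,z})=(\mu-\lambda)\binom{n}{2}-|\S_0(\A)|$, which is at least $|\S_1(\A)|+|\S_0(\A)|$ by (B1); otherwise some $y_{w,z}$ meets no arc of $C$, which forces $C$ to contain $(s,x_{u,v})$ for all $\{u,v\}\ne\{w,z\}$ and then either $(s,x_{w,z})$, giving the bound at once, or, for every $\{u,v\}\ne\{w,z\}$, an arc of capacity $\mu-\lambda-a_{u,v}$ at $y_{u,v}$; summing these contributions and using $c_{w,z}=|\S_1(w,z,\A)|+a_{w,z}$ reduces the needed inequality to $|\S_0(\A)|+|\S_1(w,z,\A)|\le(\mu-\lambda)\bigl(\binom{n}{2}-1\bigr)$, which is (B2). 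Granting this, Max‑Flow Min‑Cut supplies the flow. A pleasant feature worth recording is that the particular choice of the $a_{w,z}$ is irrelevant: because (B1) and (B2) are phrased in terms of $|\S_0(\A)|$ alone, the cut bound goes through for any admissible choice.
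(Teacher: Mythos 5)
Your proof is correct. The necessity half is essentially the paper's own argument: a strong colour class contains at most one $uv$-edge, so each class of $\S_0(\A)$ absorbs at least one non-$uv$-edge and each class of $\S_1(u,v,\A)$ at least one, which must all come from the $(\mu-\lambda)\left(\frac{n(n-1)}{2}-1\right)$ non-$uv$-edges of $\mu K_n\setminus\lambda K_n$. Your sufficiency argument, however, takes a genuinely different route. The paper first invokes Lemma~\ref{lem:addible} to settle the $\S_1(\A)$ classes and then builds the $2$-paths for the $\S_0(\A)$ classes greedily, finishing with two local exchange operations (splitting a $2$-path of $E_0$, or swapping an edge between $E_1$ and $E_0$) and deriving a contradiction with (B2) when neither is available. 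You instead hand each empty class one arbitrary edge first --- any choice of multiplicities $a_{w,z}\le\mu-\lambda$ summing to $|\S_0(\A)|$, which exists by (B1) --- thereby reducing the whole problem to a single generalized addibility question, and you solve it by rerunning the max-flow/min-cut argument of Lemma~\ref{lem:addible} with start-arc capacities $|\S_1(u,v,\A)|+a_{u,v}$ and capacities $\mu-\lambda-a_{w,z}$ on all arcs at $y_{w,z}$. I checked your cut analysis: when every vertex of $Y$ meets the cut, the bound becomes $(\mu-\lambda)\frac{n(n-1)}{2}-|\S_0(\A)|\ge|\S_0(\A)|+|\S_1(\A)|$, which is (B1); in the remaining case the $a$-terms cancel and the bound reduces exactly to (B2). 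So the argument is sound, and --- as you observe --- independent of the choice of the $a_{w,z}$. Your version is arguably cleaner and more uniform, avoiding the paper's case analysis and exchange steps, at the cost of reproving a parametrized version of the flow lemma rather than citing it as a black box.
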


\begin{proof}
\textit{Necessity}: Suppose that $\lambda K_n$ is strongly $2$-extendible with respect to $(\A, \mu K_n)$. Let $\A'$ be the resulting partial strong path decomposition in which each colour class contains at least two edges. Then (\ref{necessary-e}) implies that (B1) holds.  Moreover, note that each colour class of $\S_1(u,v, \A)$ must be extended to a colour class of $\A'$ with at least one more edge (that cannot be a $uv$-edge), and each colour class of $\S_0(\A)$ must be extended to a colour class of $\A'$ with at least two edges (at most one of which is a $uv$-edge).  Thus (B2) follows as the right hand side counts the number of edges of $\mu K_n \setminus \lambda K_n$ that are not $uv$-edges.

\textit{Sufficiency}: We must prove that $\lambda K_n$ is strongly $2$-extendible with respect to $\A$ if (B1) and (B2) hold.  So we need to show that we can add, from the edges of $\mu K_n \setminus \lambda K_n$,
\begin{itemize}
\item for each colour class of $\S_0(\A)$, a 2-path, and
\item for each edge $e \in \S_1(\A)$, an edge $e'$ such that $e$ and $e'$ form a 2-path.
\end{itemize}
In other words, we need to find two sets $E_0$ and $E_1$ using distinct edges of $\mu K_n \setminus \lambda K_n$ such that
\begin{itemize}
\item $E_0$ contains $|\S_0(\A)|$ 2-paths, 
\item $E_1$ contains $|\S_1(\A)|$ edges and there is a bijection $\phi: \S_1(\A) \rightarrow E_1$ such that, for all $e \in \S_1(\A)$, $e$ and $\phi(e)$ form a 2-path. 
\end{itemize}
Notice that conditions (A1) and (A2) of Lemma~\ref{lem:addible} are satisfied as they are weaker conditions than (B1) and (B2). Thus Lemma~\ref{lem:addible} implies $\S_1(\A)$ is addible with respect to $\mu K_n$ and we can find $E_1$ and $\phi$ as required.  It only remains to construct $E_0$.  We initially assume it is the empty set; then our aim is to add 2-paths until $|E_0|=|\S_0(\A)|$.  Possibly, while doing this we will amend $E_1$ and $\phi$, but we will not reduce the size of $E_1$, and will take care to ensure that each pair $e$ and $\phi(e)$ is always a 2-path.
Let $L$ be the edges of $\mu K_n \setminus \lambda K_n$ in neither $E_0$ nor $E_1$ (so as we construct $E_0$, $L$ will also be continually redefined as edges of $L$ are used to enlarge $E_0$ and $E_1$ is amended).  For any pair of vertices $u$ and $v$, let $L(u,v)$ be the edges of $L$ between $u$ and $v$.

First choose $x, y \in V$ such that $|L(x,y)| \geq |L(u,v)|$ for any $u, v \in V(K_n)$.  Let $L'=L \setminus L(x,y)$. 
Add to $E_0$ a maximal set of disjoint 2-paths using edges of $L'$.  If we are not done, then any unused edges of $L'$ must all join the same pair $u,v$ of vertices (else further 2-paths can be found).  Next add to $E_0$ as many 2-paths as possible (or as needed) that use one $uv$-edge and one $xy$-edge from $L$.  By the initial choice of $x$ and $y$, when no further such 2-paths can be added, we have that $L$ contains only $xy$-edges.

Let us show that if $|L|<2$, then we are done.  If we are not done then the number of 2-paths in $E_0$ is less than $|\S_0(\A)|$ so the number of edges used in these 2-paths is at most $2(|\S_0(\A)|-1)$.  The number of edges used in $E_1$ is $|\S_1(\A)|$.  If $|L|<2$, then all but at most one edge of $\mu K_n \setminus \lambda K_n$ is in either $E_0$ or $E_1$.  Hence
\[
 (\mu - \lambda) \frac{n(n-1)}{2} -1 \leq 2|E_0| + |E_1| \leq   2(|\S_0(\A)|-1) + |\S_1(\A)|,
\]
contradicting (B1).  

Let $E_1(\overline{x,y})$ be the edges of $E_1$ that are not $xy$-edges.
As long as we need to add further 2-paths to $E_0$, we do, if possible, one of the following (remembering that $L$ is a set of $xy$-edges of size at least 2):
\begin{itemize}
\item if there is a 2-path $P$ in $E_0$ that does not contain an $xy$-edge, then we remove $P$ from $E_0$ and replace it with two 2-paths which each use an edge of $P$ and an $xy$-edge from $L$;
\item if there is an edge $e \in E_1(\overline{x,y})$ and $f = \phi^{-1}(e)$ is not an $xy$-edge, then we remove $e$ from $E_1$ and replace it with an $xy$-edge $e'$ from $L$ (and let $\phi(f)=e'$), and also add to $E_0$ a 2-path consisting of $e$ and an $xy$-edge from $L$.
\end{itemize}

Suppose that neither operation is possible and still $|E_0| < |\S_0(\A)|$.  We notice that every 2-path in $E_0$ contains exactly one edge that is not an $xy$-edge, and, for every edge $e$ in $E_1(\overline{x,y})$, we have $\phi^{-1}(e) \in \S_1(x,y,\A)$.  Thus, as all the edges of $\mu K_n \setminus \lambda K_n$ that are not $xy$-edges belong to either $E_0$ or $E_1(\overline{x,y})$, we have
\[
|\S_0(\A)|+ |\S_1(x,y, \A)| > |E_0| + |E_1(\overline{x,y})| = (\mu - \lambda)\left(\frac{n(n-1)}{2} - 1\right).
\]
This contradiction of (B2) completes the proof.   
\end{proof}

 \section{Amalgamations and Detachments}\label{section:amalgamation}
 
In this section, we take a step towards the proofs of Theorems~\ref{thm:hamilton} and~\ref{thm:2factor} by formulating, in Corollary~\ref{cor:main}, conditions for the existence of solutions to Problems 1 and 2 in terms of the number of edges each colour class of the decomposition to be enclosed must contain.  
 We shall first need some definitions and auxiliary results.  
  
 A graph $H$ is an \emph{amalgamation} of a graph $G$ if there exists a surjection~$\phi$ from $V(G)$ onto $V(H)$ and a bijection $\psi$ between $E(G)$ and $E(H)$ such that $e \in E(G)$ joins $u$ and $v$ if and only if $\psi(e) \in E(H)$ joins $\phi(u)$ and $\phi(v)$.  The functions $\psi$ and $\phi$ are called \emph{amalgamation functions}.   
 We say that $G$ is a \emph{detachment} of $H$ if $H$ is an amalgamation of $G$; that is, if there exist amalgamation functions $\psi$ and $\phi$ as just defined.  We say that a vertex $v \in V(H)$ \emph{splits} into the set of vertices $\{ u \in V(G) \mid \phi(u)=v \in V(H)\}$.
 Let $\sigma: V(H) \rightarrow \mathbb{N}$ be a function.  Then 
 a \emph{$\sigma$-detachment} of $H$ is a detachment in which each vertex $v \in V(H)$ splits into $\sigma(v)$ vertices of $V(G)$. 
We note that as there is a bijection between the edges sets of a graph and a detachment, an edge-colouring of one naturally induces an edge-colouring of the other. If $a$ and $b$ are real numbers, then we write $a \approx b$ to mean $\lfloor a \rfloor \leq b \leq \lceil a \rceil$. For a graph $G$, let $\omega(G)$ denote the number of connected components of $G$, let $m_G(u, v)$ denote the number of edges joining vertices $u$ and $v$ in $G$, and let $\ell_G(w)$ denote the number of loops incident with vertex $w$ in $G$.

 The next lemma is a simplified version of a theorem of Bahmanian and Rodger~\cite{bahmanian1}.  

\begin{lemma}[\cite{bahmanian1}]\label{lem:bahmanian}
Let $H$ be a $k$-edge-coloured graph and let $\sigma: V(H) \rightarrow \mathbb{N}$ be a function such that for each $w \in V(H)$, $\sigma(w) = 1$ implies $\ell_H(w) = 0$. 
Then there exists a loopless $\sigma$-detachment $G$ of $H$ and an amalgamation function~$\phi$ from $V(G)$ onto $V(H)$ such that
\begin{itemize}
\item[\emph{(X1)}] $d_{G}(u) \approx d_H(w) / \sigma(w)$ for each $w \in V(H)$ and each $u \in \phi^{-1}(w)$;
\item[\emph{(X2)}] $d_{G(j)}(u) \approx d_{H(j)}(w) / \sigma(w)$ for each $w \in V(H)$, each $u \in \phi^{-1}(w)$ and each $j \in \{1, \dots, k\}$;
\item[\emph{(X3)}] $m_G(u, u') \approx \ell_H(w) / \binom{\sigma(w)}{2}$ for each $w \in V(H)$ with $\sigma(w) \geq 2$ and every pair of distinct vertices $u, u' \in  \phi^{-1}(w)$;
\item[\emph{(X4)}] $m_G(u,v) \approx m_H(w,z) /(\sigma(w)\sigma(z))$ for every pair of distinct vertices $w, z \in V(H)$, each $u \in \phi^{-1}(w)$ and each $v \in \phi^{-1}(z)$; and
\item[\emph{(X5)}] for $1 \leq j \leq k$, if $d_{H(j)}(w) / \sigma(w)$ is an even integer for each $w \in V(H)$, then $\omega(G(j)) = \omega(H(j))$.
\end{itemize}
\end{lemma}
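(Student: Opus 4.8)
\quad
The plan is to prove the lemma by induction on $N:=\sum_{w\in V(H)}(\sigma(w)-1)$, the total number of vertex splits that must be performed. If $N=0$ then $\sigma\equiv 1$, so by hypothesis $\ell_H(w)=0$ for every $w$, and $G=H$ with $\phi$ the identity satisfies (X1)--(X5) trivially. If $N\geq 1$, fix a vertex $w$ with $\sigma(w)=r\geq 2$, and detach from $w$ a single new vertex $w'$, producing a graph $H'$ on $V(H)\cup\{w'\}$ together with $\sigma'$ defined by $\sigma'(w)=r-1$, $\sigma'(w')=1$ and $\sigma'(z)=\sigma(z)$ otherwise; then $N(\sigma')=N-1$, and applying the induction hypothesis to $(H',\sigma')$ and composing the amalgamation functions produces $G$ and $\phi$. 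Here $w'$ must receive no loops, matching $\sigma'(w')=1$ and keeping $G$ loopless; this is precisely why the hypothesis ``$\sigma(w)=1\Rightarrow\ell_H(w)=0$'' is needed --- a vertex that is never split cannot absorb a loop, whereas the $\ell_H(w)$ loops at a vertex with $\sigma(w)\geq 2$ may be detached into edges between its copies, as quantified by the $\binom{\sigma(w)}{2}$ in (X3).

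The crux is the single detachment step, which must be \emph{fair}: the new vertex $w'$ should receive about $d_H(w)/r$ edges in total, about $d_{H(j)}(w)/r$ of each colour $j$, about $m_H(w,v)/r$ of the $w$--$v$ edges for each $v\neq w$, while of the $\ell_H(w)$ loops at $w$ about $\ell_H(w)\binom{r-1}{2}/\binom{r}{2}$ remain as loops at $w$ and about $\ell_H(w)(r-1)/\binom{r}{2}$ become $w$--$w'$ edges (none becoming loops at $w'$). Granting that this step can always be carried out with the rounding chosen appropriately, a routine --- if slightly tedious --- computation with floors and ceilings shows that iterating yields (X1)--(X4): the essential point is that the successive target fractions $\frac{1}{r},\frac{1}{r-1},\dots,\frac{1}{2}$ telescope to $\frac{1}{r}$, and one checks that at each peel the admissible rounding window of the surviving vertex still contains the global target. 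I would isolate this rounding bookkeeping as a separate lemma.

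To realize the single step satisfying (X1)--(X4), the natural tool is a flow argument in the spirit of the proof of Lemma~\ref{lem:addible}: build an auxiliary network whose intermediate vertices record, for the edges incident with $w$, their colour and their other endpoint (with a special symbol for loops), assign capacities equal to the relevant $\lfloor\cdot\rfloor$ or $\lceil\cdot\rceil$ values, and extract an integral maximum flow by the Max-Flow--Min-Cut Theorem; equivalently one may invoke a known theorem on balanced (equitable) edge-colourings of the incidence structure of the star at $w$. Which end of each rounding interval to hit is dictated by the stage of the iteration, chosen precisely so that the telescoping above goes through. This part is essentially mechanical once the network is set up correctly.

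The genuinely delicate ingredient is (X5). If $d_{H(j)}(w)/\sigma(w)$ is an even integer for every $w$, then $H(j)$ has all even degrees, so each of its components has an Eulerian circuit, and the rounding window for colour $j$ at $w$ collapses to the single value $d_{H(j)}(w)/\sigma(w)$, forcing $w'$ to receive exactly that (even) number of colour-$j$ edges. Since detaching never merges components, it suffices to arrange the split at $w$ so that no colour-$j$ component is disconnected. The idea is to view an Eulerian circuit of the colour-$j$ component $C$ containing $w$ as a sequence of \emph{passes} through $w$ (each a pair of edges at $w$) and to move to $w'$ the edges of a suitable set of these passes, chosen so that, after renaming those passes, the circuit still traverses both $w$ and $w'$; then $C$ is not split, because a single closed walk spans a connected subgraph. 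The main obstacle --- and the step I expect to require the most care --- is to make this choice \emph{simultaneously} for all even colour classes while still meeting the neighbour-multiplicity and loop requirements of (X1)--(X4); this forces one to build the flow network so that, within each even colour class, a feasible integral flow is realizable by an admissible choice of Eulerian passes, reconciling the ``flow'' and ``Eulerian circuit'' viewpoints. Since the statement is quoted from~\cite{bahmanian1}, one may of course simply cite it; the above is the route I would take to reprove it.
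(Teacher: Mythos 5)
The paper does not prove this lemma: it is imported, in simplified form, from Bahmanian and Rodger, so there is no in-paper argument to compare against. Your outline does follow the strategy of that literature --- induction on $\sum_w(\sigma(w)-1)$, detaching one new vertex at a time, sharing the edges at the split vertex fairly among colours and neighbours, and using Euler tours of the colour classes to control connectivity --- so the overall route is the standard and correct one, and your explanation of why the hypothesis ``$\sigma(w)=1\Rightarrow\ell_H(w)=0$'' is needed is accurate.

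As written, however, the proposal defers exactly the two steps where the content of the theorem lives, so it is an outline rather than a proof. First, your telescoping bookkeeping addresses the degree-type conditions (X1), (X2), (X4), but (X3) is more delicate than you indicate: after the first peel the copies of $w$ are joined by ordinary edges as well as sharing the remaining loops, and at every subsequent peel the new copy must receive the correct number of edges to \emph{each} previously created copy in addition to its share of loops, so that $m_G(u,u')\approx \ell_H(w)/\binom{\sigma(w)}{2}$ ends up holding for every pair of copies, including pairs created many steps apart. It is not automatic that one choice of the detached edge set simultaneously lands in all of these rounding windows together with those of (X2) and the connectivity constraints; this simultaneous feasibility is precisely why Bahmanian and Rodger work with equalized/balanced edge colourings (and why Nash-Williams used laminar families) rather than a generic max-flow per step. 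Second, for (X5) your reduction --- detachment cannot merge components, so it suffices that no even colour class is disconnected at any peel --- is correct, and the Euler-pass idea is the right mechanism, but you have not shown that passes can be selected compatibly across all even colour classes at once while also meeting the multiplicity targets; you flag this yourself as the step needing the most care. Since the lemma is cited rather than proved in the paper, none of this affects the paper, but as a standalone proof the proposal is incomplete at these two points.
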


Our next lemma is a complete multigraph analogue of~\cite[Theorem~2]{hilton1}. The argument is a straightforward adaptation of the proof of~\cite[Theorem 2]{hilton1} --- it merely suffices to apply Lemma~\ref{lem:bahmanian} instead of~\cite[Theorem 1]{hilton1} and this was observed by Bahmanian~\cite{bathesis}. For the sake of completeness, we include the proof.

\begin{lemma}\label{lem:hilton}
Let $k$, $m$, $n$, $r$ 
 and $\lambda$ be non-negative integers. A path decomposition $\A$ of $\lambda K_n$ of size $k$ in which  $r$ of the colour classes  are cycle-free can be embedded in a $2$-factorization of $\lambda K_{n+m}$ in which $r$ of the $2$-factors are Hamiltonian cycles if and only if
\begin{itemize}
\item[\emph{(Y1)}] $k = \lambda (n + m - 1) / 2$ is an integer, and
\item[\emph{(Y2)}] for $1 \leq i \leq k$, $\A(i)$ contains at most $m$ disjoint paths.
\end{itemize} 
\end{lemma}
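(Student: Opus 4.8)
The plan is to mimic the proof of \cite[Theorem~2]{hilton1}, which proceeds by \emph{amalgamating} the larger complete graph down to something built from $\lambda K_n$ plus a single extra ``fat'' vertex, and then recovering the desired $2$-factorization by applying the detachment lemma (Lemma~\ref{lem:bahmanian}) in reverse. Concretely, I would first establish necessity, which is the routine direction: if $\A$ embeds in a $2$-factorization of $\lambda K_{n+m}$ then each colour class becomes a $2$-factor (or Hamiltonian cycle), so there are $k$ of them and a count of edges gives $k\cdot(n+m) = 2\cdot\lambda\binom{n+m}{2}$, i.e.\ (Y1); and since a $2$-factor restricted to the $n$ original vertices is a disjoint union of paths and cycles whose path-ends all lie among the $m$ new vertices, each $\A(i)$ has at most $m$ disjoint paths, which is (Y2). (One checks that a cycle in $\A(i)$ forces $r$ of the colour classes not to be cycle-free, matching the hypothesis, and that the ``cycle-free $\Rightarrow$ Hamiltonian'' bookkeeping is consistent.)

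For sufficiency, assume (Y1) and (Y2). I would construct an auxiliary $k$-edge-coloured graph $H$ on vertex set $V(K_n)\cup\{\infty\}$ as follows: take $\lambda K_n$ with the colouring given by $\A$, and then add, at the single new vertex $\infty$, enough edges joining $\infty$ to the vertices of $K_n$ and enough loops at $\infty$ so that every colour class $H(i)$ becomes $2$-regular \emph{except} at $\infty$, where it has degree $\lambda(n+m-1) - \big(\text{current degree contributions}\big)$ — the point being to make $H$ the amalgamation, at $\infty$, of the target graph $\lambda K_{n+m}$ with its sought $2$-factorization, where the $m$ new vertices have all been identified into $\infty$. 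The edge counts work out precisely because of (Y1): each colour class of $\lambda K_{n+m}$ is a $2$-factor on $n+m$ vertices, so amalgamating $m$ of them into one vertex leaves a graph in which that colour class has degree $2$ at each old vertex and degree $2m$ at $\infty$ (minus $2\cdot(\text{loops})$ once loops are accounted for). The condition (Y2) is exactly what is needed to distribute, for each colour, the right number of $\infty$-incident edges versus loops so that $H(i)$ is realizable: an $\A(i)$ that is a union of $p_i \le m$ paths and some cycles must be completed to a $2$-factor of $\lambda K_{n+m}$ using the $m$ split vertices, which is possible iff $p_i \le m$; the cycle-free colour classes are handled so that on detachment they become single Hamiltonian cycles (this uses the fact that a connected $2$-regular detachment can be guaranteed via (X5) of Lemma~\ref{lem:bahmanian} when the degree-at-$\infty$ divided by $\sigma(\infty)=m$ is the even integer $2$).

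Having built $H$, I would set $\sigma(\infty) = m$ and $\sigma(w) = 1$ for all $w \in V(K_n)$, check the hypothesis of Lemma~\ref{lem:bahmanian} (namely $\ell_H(w) = 0$ whenever $\sigma(w)=1$, which holds since the only loops are at $\infty$), and apply the lemma to obtain a loopless $\sigma$-detachment $G$ of $H$. Properties (X1)–(X4) force $G$ to be exactly $\lambda K_{n+m}$ (each old–old edge multiplicity stays $\lambda$; each old–$\infty$ bundle of $\lambda m$ edges splits $\lambda$ to each of the $m$ new vertices; each block of loops at $\infty$ splits into a $\lambda K_m$ on the new vertices), and (X2) forces each colour class $G(i)$ to be $2$-regular, hence a $2$-factor. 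Finally (X5), applied to the $r$ colour classes whose degree at $\infty$ over $m$ is the even integer $2$, preserves the number of components, so those classes — which were connected in $H$ by construction — detach into Hamiltonian cycles. Since $\sigma(w)=1$ at every old vertex, the restriction of $G$'s colouring to $\lambda K_n$ is exactly $\A$, so $\A$ is embedded as required.

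\textbf{Main obstacle.} The genuine work is the explicit construction of $H$ and the verification that the loop/edge distribution at $\infty$ can be chosen consistently colour-by-colour — i.e.\ turning the single inequality (Y2) into a simultaneous realizability statement for all $k$ colour classes while keeping the \emph{total} degree at $\infty$ equal to $\lambda m(n+m-1)$ and the total loops equal to $\lambda\binom{m}{2}$. This is precisely the bookkeeping carried out in \cite{hilton1}, and I expect the cleanest route is simply to transcribe that argument, substituting Lemma~\ref{lem:bahmanian} for \cite[Theorem~1]{hilton1}, as the authors indicate.
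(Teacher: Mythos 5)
Your proposal is correct and follows essentially the same route as the paper: build the amalgamated graph $H$ by adding one new vertex carrying $\lambda m$ edges to each old vertex and $\lambda\binom{m}{2}$ loops, distribute these among the colour classes so each class has degree $2$ at old vertices and $2m$ at the new vertex (which is where (Y2) enters), and then apply Lemma~\ref{lem:bahmanian} with $\sigma=m$ at the new vertex and $\sigma=1$ elsewhere, using (X5) for the Hamiltonian classes. (Only a cosmetic slip: your necessity edge count should read $k(n+m)=\lambda\binom{n+m}{2}$, but the stated conclusion (Y1) is the right one.)
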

 
 \begin{proof}
 \textit{Necessity}: Suppose $\A$ can be embedded in a $2$-factorization $\F$ of $\lambda K_{n+m}$ of size $k$.  For each $u \in V(\lambda K_{n+m})$, we have
 \begin{itemize}
 \item $\sum_{i=1}^k d_{\F(i)}(u) = \lambda( n+m - 1)$, and
 \item for $1 \leq i \leq k$, $d_{\F(i)}(u) = 2$. 
 \end{itemize}
 So (Y1) holds. For $1 \leq i \leq k$, the subgraph of $\F(i)$ induced by the vertices of $V(\lambda K_n)$ is $\A(i)$, a collection of paths.  Each of the two endvertices of each path is adjacent in $\F(i)$ to one of the vertices of $V(\lambda K_{m+n}) \setminus V(\lambda K_n)$ and each of these $m$ vertices is adjacent to at most two of the endvertices. So (Y2) holds. 
 
 \textit{Sufficiency}: We prove that $\A$ can be embedded in a $2$-factorization of $\lambda K_{n+m}$ in which $r$ of the $2$-factors are connected if (Y1) and (Y2) are satisfied.  We create a new graph $H$. It has vertex set $V(\lambda K_n) \cup \{v\}$, where $v$ is a new vertex and its edge set contains 
 \begin{itemize}
 \item the edges of $\lambda K_n$,
 \item for each $u \in V(\lambda K_n)$, $\lambda m$ edges joining $v$ to $u$, and
 \item $\lambda \displaystyle{\binom{m}{2}}$ loops on $v$. 
 \end{itemize}
 
 \begin{claim}\label{claim:1}
 There exists a $k$-edge-colouring $\B$ of $H$  such that 
 \begin{itemize}
 \item[A.] the restriction of $\B$ to $H - v$ is $\A$,
 \item[B.] for $1 \leq i \leq k$,  $\omega(\B(i)) = 1$ if and only if $\A(i)$ is cycle-free,
 \item[C.] for $u \in V(H) \setminus \{v\}$, for $1 \leq i \leq k$, $d_{\B(i)}u = 2$, and
 \item[D.] $2m$ edges of each colour class are incident with $v$. 
 \end{itemize}
 \end{claim}
 
 Before we prove the claim, we show that it implies the lemma.   Let $\sigma$ be a function that maps $V(H)$ to $\mathbb{N}$ such that $\sigma(v) = m$ and, for each $u \in V(H) \setminus \{v\}$, $\sigma(u) = 1$. 
 Note that the degree of $v$ is 
 (counting each loop twice) 
     \begin{eqnarray*}
     \lambda m n + 2 \lambda \binom{m}{2} &=& \lambda m n + \lambda m(m - 1) \\ 
      &=& 2mk.
     \end{eqnarray*}

We apply Lemma~\ref{lem:bahmanian} to find a loopless $\sigma$-detachment $G$ of $H$ and an amalgamation function $\phi$ from $V(G)$ onto $V(H)$. By (X1), $d_G(u)= \lambda (m+n-1)$ for each $u \in \phi^{-1}(v)$. By (X3), $m_G(u, u') = \lambda$ for each pair of distinct vertices $u, u' \in \phi^{-1}(v)$. By (X4), $m_G(u, w) = \lambda$ for each $u \in V(H) \setminus \{v\}$ and each $w \in \phi^{-1}(v)$. Therefore $G = \lambda K_{n+m}$.  
Combining Claim~\ref{claim:1}.A, Claim~\ref{claim:1}.C, Claim~\ref{claim:1}.D and (X2), we have that the $k$-edge-colouring of $G$  yields a $2$-factorization $\B$ of $\lambda K_{n+m}$ of size $k$ such that $\A$ is embedded in $\B$. Finally Claim~\ref{claim:1}.B and (X5) imply that exactly $r$ of the $2$-factors in $\B$ are connected.  It only remains to prove Claim~\ref{claim:1}.

It may be assumed that Claim~\ref{claim:1}.A holds.  Notice that, for each $u \in V(H) \setminus \{v\}$, for $1 \leq i \leq k$, we have
  $d_{\A(i)}(u) \leq 2$ and, by (Y1), $d_{H}(u) = \lambda (m+n - 1)  = 2k$. Thus we can colour the non-loop edges incident with $v$ such that two edges of each colour class are incident with $u$. So Claim~\ref{claim:1}.C holds. Notice also from (Y2) that the number of edges between $v$ and $V(H) \setminus \{v\}$  in a given colour class is even and is at most $2m$ (since for each path in $\A(i)$ two of these edges are coloured $i$).   Thus, since the degree of $v$ is $2mk$, the loop edges incident with $v$ can be coloured in such a way that $2m$ edges of each colour class are incident with $v$. So Claim~\ref{claim:1}.D holds. As Claim~\ref{claim:1}.B is clearly satisfied the proof is complete. 
 \end{proof}

 \begin{corollary}\label{cor:main}
 Let $k$, $n$, $m$,  $\lambda$, and $\mu$ be positive integers such that $\mu > \lambda$. A path decomposition $\A$ of $\lambda K_n$ of size $k$  can be enclosed in a $2$-factorization $\F$ of $\mu K_{n+m}$ if and only if
 \begin{itemize}
 \item[\emph{(W1)}] $k = \mu (n + m - 1) / 2$ is an integer, and 
 \item[\emph{(W2)}] $\A$ can be enclosed in a path decomposition $\B$ of $\mu K_n$  of size $k$ in which, for $1 \leq i \leq k$, the number of edges in $\B(i)$ is at least $n - m$.
 \end{itemize}
 Moreover, if $\A$ is strong then, in addition, $\F$ is a Hamiltonian decomposition and $\B$ is strong. 
 \end{corollary}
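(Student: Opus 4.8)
The plan is to prove the biconditional by sandwiching the graph $\mu K_n$ between $\lambda K_n$ and $\mu K_{n+m}$: first enclose $\A$ in a path decomposition $\B$ of $\mu K_n$, and then embed $\B$ in a $2$-factorization of $\mu K_{n+m}$ using Lemma~\ref{lem:hilton} (with $\mu$ in place of $\lambda$). The linchpin is a one-line edge count: a colour class of a path decomposition of $\mu K_n$ is a spanning union of disjoint paths and cycles on the $n$ vertices, and if it has exactly $p$ path components then it has exactly $n-p$ edges; hence ``at least $n-m$ edges'' is equivalent to ``at most $m$ path components'', which is precisely condition (Y2) of Lemma~\ref{lem:hilton}. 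Condition (W1) is literally (Y1).

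For sufficiency, I would assume (W1) and (W2), take the path decomposition $\B$ of $\mu K_n$ of size $k$ supplied by (W2), observe that each $\B(i)$ having at least $n-m$ edges means it has at most $m$ path components, so (Y1) and (Y2) hold for $\B$, and apply Lemma~\ref{lem:hilton} with $\mu$ in place of $\lambda$ and $r$ equal to the number of cycle-free colour classes of $\B$. This yields a $2$-factorization $\F$ of $\mu K_{n+m}$ in which $\B$ is embedded; since $\A(i)\subseteq\B(i)\subseteq\F(i)$ for all $i$ and $\mu>\lambda$, $\A$ is enclosed in $\F$. If in addition $\A$ is strong, then the $\B$ guaranteed by (W2) can be taken strong, so $r=k$, all $k$ resulting $2$-factors are Hamiltonian cycles, and $\F$ is a Hamiltonian decomposition.

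For necessity, suppose $\A$ is enclosed in a $2$-factorization $\F$ of $\mu K_{n+m}$. A degree count at any vertex gives $\mu(n+m-1)=2k$, which is (W1). Define $\B(i)$ to be the subgraph of $\F(i)$ induced on $V(\mu K_n)$; then $\{\B(1),\dots,\B(k)\}$ is a path decomposition of $\mu K_n$ of size $k$, and because every vertex and edge of $\A(i)$ already lies in $V(\mu K_n)$ we have $\A(i)\subseteq\B(i)$, so $\A$ is enclosed in $\B$. Arguing as in the necessity part of Lemma~\ref{lem:hilton}, each of the $m$ vertices of $V(\mu K_{n+m})\setminus V(\mu K_n)$ lies on exactly one cycle of the $2$-factor $\F(i)$, so deleting these $m$ vertices breaks $\F(i)$ into $\B(i)$ with at most $m$ path components; thus $\B(i)$ has at least $n-m$ edges and (W2) holds. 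Finally, if $\F$ is a Hamiltonian decomposition then each $\F(i)$ is a single cycle on $n+m\ge n+1$ vertices, so deleting the $m\ge 1$ outside vertices leaves $\B(i)$ cycle-free; hence $\B$ is strong, and, being a subdecomposition, so is $\A$.

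Every step is routine once Lemma~\ref{lem:hilton} is in hand; the only place that needs care is the necessity direction --- verifying that the restriction of a $2$-factor to the inside vertices genuinely is a path-decomposition colour class with at most $m$ path components, and tracking connectivity so that strong $\A$ (equivalently strong $\B$) lands in the Hamiltonian case. I do not expect any genuinely combinatorial obstacle beyond Lemma~\ref{lem:hilton}.
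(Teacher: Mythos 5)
Your proof is correct and follows essentially the same route as the paper: enclose $\A$ in a path decomposition $\B$ of $\mu K_n$, apply Lemma~\ref{lem:hilton} with $\mu$ in place of $\lambda$ (and $r=k$ when $\B$ is strong), and observe that a spanning union of disjoint paths and cycles on $n$ vertices with $p$ path components has $n-p$ edges, so (W2) is exactly (Y2). The paper's proof is just a terser statement of this same equivalence; your extra care in the necessity direction (taking $\B$ to be the restriction of $\F$ to $V(\mu K_n)$) is the intended, implicit step.
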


 \begin{proof}
 It suffices to show that, by Lemma~\ref{lem:hilton} (with $r=k$ if $\A$ is strong), that conditions (Y2) and (W2) are equivalent. This follows from the fact that, for $1 \leq i \leq k$, $\B(i)$ contains at least $n-m$ edges if and only if it contains at most $n - (n - m) = m$ disjoint paths.   
 \end{proof}
 
\section{Extending Partial Path Decompositions}\label{section:lemmas}

 A partial decomposition of $\mu K_n$ is said to be \emph{strict} if it is a decomposition of a \emph{proper} subgraph of $\mu K_n$.  
In this section, we describe in Lemmas~\ref{lem:extension-strong} and~\ref{lem:extension-weak} conditions that guarantee when it is possible to extend a strict partial path decomposition of $\mu K_n$ into a partial path decomposition that contains one more edge.

 \begin{lemma}\label{lem:extension-strong}
 Let $k$, $n$, $\lambda$ and $\mu$ be positive integers such that $\mu > \lambda$ and $k \geq \mu(n-1) - 1$. Let $\A$ be a strong path decomposition of $\lambda K_n$ of size $k$. Suppose that $\A$ is enclosed in a strict partial strong path decomposition $\A'$ of $\mu K_n$ of size $k$.  Then $\A$ can be enclosed in a partial strong path decomposition $\A^*$ of $\mu K_n$ of size $k$ whose colour classes are the same size as those of $\A'$ except for one that contains one more edge.
   \end{lemma}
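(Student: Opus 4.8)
The plan is to argue by a counting/extremal argument: since $\A'$ is a decomposition of a \emph{proper} subgraph of $\mu K_n$, there is at least one edge $e$ of $\mu K_n$ not used by $\A'$, and we want to add $e$ to some colour class $\A'(i)$ so that the class remains a disjoint union of paths (no cycle is created). Write $e=uv$. Adding $e$ to $\A'(i)$ fails to produce a strong path decomposition exactly when, in $\A'(i)$, either (a) $u$ or $v$ already has degree $2$, or (b) $u$ and $v$ are the two endpoints of a common path (so that adding $e$ closes a cycle). So the task reduces to showing that the number of colour classes that are "bad" for $e$ in one of these two ways is strictly less than $k$, hence some good class exists.

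The key steps, in order: First I would fix an edge $e=uv \in E(\mu K_n)\setminus E(\text{dom}(\A'))$, which exists by strictness. Second, I would bound the number of colour classes in which $u$ has degree $2$: summing degrees of $u$ over all $k$ classes of $\A'$ gives at most $d_{\mu K_n}(u) = \mu(n-1)$, but actually we should use that the total degree of $u$ in $\A'$ is $d_{\mu K_n}(u)$ minus the number of $u$-edges not covered by $\A'$; since $e$ is one such uncovered edge incident to $u$, the degree sum of $u$ over $\A'$ is at most $\mu(n-1)-1$, so the number of classes where $\deg_{\A'(i)}(u)=2$ is at most $\lfloor(\mu(n-1)-1)/2\rfloor$. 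Third, symmetrically for $v$, again using that $e$ is an uncovered $v$-edge. Fourth, I would bound the number of classes in which $u$ and $v$ are endpoints of a common path: in such a class both $u$ and $v$ have degree $\ge 1$; more carefully, a class that is "bad of type (b)" but not "bad of type (a) at $u$" has $\deg_{\A'(i)}(u)=1$. I would then combine these counts and show the total number of bad classes is at most $\mu(n-1)-1 < k$ (using the hypothesis $k \ge \mu(n-1)-1$; one has to be a little careful whether the inequality is strict, and this is where the "$-1$" and the parity floors in the degree bounds will need to be reconciled — the slack from the uncovered edge $e$ being incident to both $u$ and $v$ should provide exactly what is needed). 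Finally, pick a good class $\A'(i)$, set $\A^* = \A'$ with $e$ added to class $i$, and observe $\A \subseteq \A' \subseteq \A^*$ and that all class sizes are unchanged except class $i$, which gains one edge.

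I expect the main obstacle to be the bookkeeping in the fourth step and the final inequality: one must carefully define the three (overlapping) families of bad classes — $\deg(u)=2$, $\deg(v)=2$, and "$u,v$ endpoints of a common path" — so that a class is counted with the right multiplicity, and then show their union has size $< k$. The delicate point is that a type-(b) bad class forces $\deg_{\A'(i)}(u)\ge 1$ and $\deg_{\A'(i)}(v)\ge 1$, so such classes compete with the degree budgets of $u$ and $v$; the cleanest route is probably to bound $\#\{i : \deg_{\A'(i)}(u)=2\} + \#\{i : \deg_{\A'(i)}(u)=1 \text{ and } u,v \text{ are endpoints of a common path in }\A'(i)\}$ together by a single degree-sum argument at $u$ (each such class contributes at least $1$ to the degree of $u$, and the degree-$2$ ones contribute $2$), giving a bound of at most $\mu(n-1)-1$ on the number of classes that are bad for $e$, which is strictly less than $k$ unless $k = \mu(n-1)-1$; in that boundary case one needs the extra observation that not every class can have $u$ at full degree while $e$ remains uncovered, so a good class still exists.
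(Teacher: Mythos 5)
Your counting argument is sound as far as it goes, but it only yields that the number of ``bad'' colour classes is at most $\mu(n-1)-1\le k$: writing $A_u=\{i:\deg_{\A'(i)}(u)=2\}$, $A_v=\{i:\deg_{\A'(i)}(v)=2\}$ and $B=\{i: u,v \text{ are the endpoints of a common path}\}$, each class of $B$ contributes $1$ to the degree of $u$ and each class of $A_u$ contributes $2$, so $2|A_u|+|B|\le \mu(n-1)-1$, and similarly at $v$; summing gives $|A_u|+|A_v|+|B|\le \mu(n-1)-1\le k$ and nothing better. The boundary case is not a removable technicality, and your proposed rescue (``not every class can have $u$ at full degree while $e$ remains uncovered'') does not address it: all $k$ classes can be bad with \emph{no} class having $u$ at degree $2$. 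Concretely, take $n=3$ with vertices $u,v,w$ and $k=2\mu-1$, and let $\A'$ consist of $\mu-1$ classes each equal to a single $uv$-edge and $\mu$ classes each equal to the path $u$--$w$--$v$; the unique uncoloured edge is a $uv$-edge, and adding it to any class creates either a $2$-cycle or a triangle. So the final step of your plan, ``set $\A^*=\A'$ with $e$ added to class $i$,'' is simply not achievable in general.

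The missing idea is that the lemma does \emph{not} require $\A'\subseteq\A^*$ --- only that $\A\subseteq\A^*$ and that the multiset of class sizes changes by a single increment. The paper's proof exploits exactly this freedom: when no colour is directly compatible with $e$, it derives the structural identities $C_u^0=C_v^2$, $C_v^0=C_u^2$, $C_u^1=C_v^1$ (where $C_x^i$ is the set of colours appearing exactly $i$ times at $x$), locates a $uv$-edge $e'$ of $\lambda K_n$ with colour $a\in C_u^1\cap C_v^1$, and then \emph{recolours} an edge $e''$ of $\mu K_n\setminus\lambda K_n$ from some class $b$ into class $a$, which frees class $b$ to accept $e$. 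In the example above this swap is forced. You would need to add such an edge-exchange argument (taking care that the moved edge is not an edge of $\lambda K_n$, so that $\A$ remains enclosed) to close the gap.
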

   
   \begin{proof}
   Let $L$ be the set of edges of $\mu K_n \setminus \lambda K_n$ that are not in $\A'$.  Our aim is to show that at least one edge $e$ of $L$ can be assigned a colour $j$ in such a way that the resulting decomposition is a strong path decomposition. We say that the edge $e$ and the colour $j$ are  \emph{compatible}.  
 We remark that before finding $e$, we may first amend $\A$, but will never reduce the number of edges in a colour class.

   For a vertex $v$, for $0 \leq i \leq 2$, in $\A'$ let $C_v^i$ denote the set of colours that occur exactly $i$ times on the edges incident with $v$.
   
   Let $e$ be an edge of $L$, and let its incident vertices be $u$ and $v$. If there is a colour $j$ that belongs to  $C_u^0 \cap (C_v^0 \cup C_v^1)$ or $C_v^1 \cap (C_u^0 \cup C_u^1)$ then $e$ and $j$ are compatible and we are done. Otherwise, $C_u^0 \subseteq C_v^2$ and $C_v^0 \subseteq C_u^2$ hold.  
   
   In fact, we make the following stronger claim.
   
   \begin{claim} $C_u^0 = C_v^2$, $C_v^0 = C_u^2$ and $C_u^1 = C_v^1$
\end{claim}   
  \noindent  The number $f$ of edges incident with $u$ that are not in $L$ is at most $\mu (n-1) - 1$ (one less than its degree). Thus, since $k \geq f$, for each pair of edges incident with $u$ that are coloured alike, there is a colour that does not occur on the edges incident with $u$. Hence $|C_u^2| \leq |C_u^0|$. Similarly, we find that $|C_v^2| \leq |C_v^0|$. If $C_v^0 \subsetneq C_u^2$ then $|C_v^2| \leq |C_v^0| < |C_u^2| \leq |C_u^0|$, a contradiction. Therefore $C_u^0 = C_v^2$ and $C_v^0 = C_u^2$ implying $C_u^1 = [k] \setminus (C_u^0 \cup C_u^2) = [k] \setminus (C_v^2 \cup C_v^0) = C_v^1$.
 The claim is proved.

    Let $e'$ be a $uv$-edge in $\lambda K_n$ and let $a$ be the colour of $e'$ in $\A$.  Since $C_u^2 = C_v^0$, it follows that $a \in C_u^1$.  Hence $\A'(a)$ contains at least two disjoint paths $Q_1$, $Q_2$ where $Q_1$ is $e'$. Let $w$ be an endpoint of $Q_2$, and let $e''$ be a $uw$-edge of $\mu K_n \setminus \lambda K_n$. If $e'' \in L$,  then $e''$ and $a$ are clearly compatible. Else the colour $b \not=a$ of $e''$ either belongs to $C_x^2  = C_y^0$  or to $C_x^1 = C_y^1$. In either case recolouring $e''$ to $a$  will give a strong path decomposition in which $e$ and $b$ are compatible. This completes the proof. 
   \end{proof}
   
 \begin{lemma}\label{lem:extension-weak}
Let $k$, $n$, $m$ and $\mu$ be positive integers such that $m \geq n-2$, $n \geq 4$ if $m=n-2$, and $k \geq \mu (n+m-1) / 2$. Let $\A$ be a strict partial path decomposition of $\mu K_{n}$ of size $k$ containing at most $\mu (m-1) / 2$ 2-factors. Then a single edge can be added to one of the colour classes of $\A$ to give a partial path decomposition $\A'$ of $\mu K_n$ of size $k$ that also contains at most $\mu (m-1) / 2$ 2-factors. 
  \end{lemma}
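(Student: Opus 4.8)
The plan is to add to $\A$, without any amendment, one previously unused copy of a carefully chosen edge $uv$. Since $\A$ is strict there is a pair of vertices $u,v$ with $m_\A(u,v)<\mu$; I fix such a pair together with an unused copy $e$ of $uv$. Both $u$ and $v$ are then \emph{deficient}: $d_\A(u)\le\mu(n-1)-1$ and $d_\A(v)\le\mu(n-1)-1$. Call a colour $j$ \emph{good} if $d_{\A(j)}(u)\le 1$ and $d_{\A(j)}(v)\le 1$. Adding $e$ to a good colour class always gives a partial path decomposition of $\mu K_n$ with one more edge, and $\A(j)\cup\{e\}$ is a $2$-factor precisely when $\A(j)$ is a spanning subgraph consisting of one path from $u$ to $v$ together with vertex-disjoint cycles; call $j$ \emph{$uv$-dangerous} in that case, noting that then $d_{\A(j)}(u)=d_{\A(j)}(v)=1$. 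Thus the lemma follows as soon as one produces a good colour that is not $uv$-dangerous, or a good $uv$-dangerous colour at a moment when $\A$ has strictly fewer than $\mu(m-1)/2$ $2$-factors.

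I would argue by contradiction, assuming no such colour exists. Then every colour $j$ is \emph{bad} (meaning $d_{\A(j)}(u)=2$ or $d_{\A(j)}(v)=2$; a $2$-factor is in particular bad) or is $uv$-dangerous while $\A$ has exactly $\lfloor\mu(m-1)/2\rfloor$ $2$-factors. Writing $g_j:=d_{\A(j)}(u)+d_{\A(j)}(v)$, in both cases $g_j\ge 2$, so
\[
2\mu(n-1)-2\;\ge\;d_\A(u)+d_\A(v)\;=\;\sum_{j=1}^{k}g_j\;\ge\;2k,
\]
that is $k\le\mu(n-1)-1$. Since $k\ge\mu(n+m-1)/2\ge\mu(n-1)$ whenever $m\ge n-1$, this already settles every case with $m\ge n-1$.

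For the remaining case $m=n-2$ (so $n\ge 4$) one needs a second idea: counting the copies of $uv$. Suppose first that no colour is $uv$-dangerous, so every colour is bad. Letting $h_j$ be the number of copies of $uv$ in $\A(j)$, we have $h_j\le\min\{d_{\A(j)}(u),d_{\A(j)}(v)\}\le g_j-2$, hence $m_\A(u,v)=\sum_j h_j\le(d_\A(u)+d_\A(v))-2k$. On the other hand every unused copy of $uv$ is an unused edge-copy at $u$, and at $v$, so $m_\A(u,v)\ge d_\A(u)-\mu(n-2)$ and $m_\A(u,v)\ge d_\A(v)-\mu(n-2)$; adding these, feeding in the previous bound and $d_\A(u)+d_\A(v)\le 2\mu(n-1)-2$, gives
\[
4k\;\le\;\bigl(d_\A(u)+d_\A(v)\bigr)+2\mu(n-2)\;\le\;2\mu(2n-3)-2,
\]
so $2k\le\mu(2n-3)-1<\mu(2n-3)\le 2k$, a contradiction. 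Suppose instead that some colour is $uv$-dangerous; then $\A$ has exactly $\lfloor\mu(m-1)/2\rfloor$ $2$-factors, each contributing $4$ to $\sum_j g_j$ while the remaining colours contribute at least $2$ apiece, whence $2\mu(n-1)-2\ge\sum_j g_j\ge 2k+2\lfloor\mu(m-1)/2\rfloor\ge 2k+\mu(n-3)-1$, so $2k\le\mu(n+1)-1$; and $\mu(n+1)-1<\mu(2n-3)\le 2k$ for $n\ge 4$, again a contradiction, which completes the proof.

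The genuinely delicate point is this last case, $m=n-2$ with $\mu\ge 2$: a bare degree count is too lossy there, and one is forced both to track the multiplicity of the unique ``expandable'' edge $uv$ across the colour classes and to dispose of the $2$-factor-cap subcase separately — the latter being exactly where the hypothesis $n\ge 4$ is needed.
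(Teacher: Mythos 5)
Your proof is correct and follows essentially the same strategy as the paper's: fix an uncoloured $uv$-edge, count the coloured edges incident with $u$ and $v$ to locate a colour class with room at both endpoints, and handle the saturated $2$-factor case by observing that each $2$-factor consumes extra such edges, with the multiplicity of $uv$ used to sharpen the count when $m=n-2$. The bookkeeping is organised differently (a single contradiction via $\sum_j g_j$ and $h_j$ rather than the paper's $F_i,Q_i$ partition), but the underlying inequalities are the same and all steps check out.
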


  \begin{proof}
  Let $e=uv$ be an edge of $\mu K_n \setminus \A$.
To prove the lemma we will show that we can choose a colour $j$ to assign to $e$ to obtain another partial path decomposition $\A'$, and that if the number of 2-factors in $\A$ is $\mu (m-1) / 2$, this assignment does not create an additional 2-factor.
  
Suppose that the number of 2-factors in $\A$ is less than $\mu (m-1) / 2$.  Then it is enough to find a colour that appears on no more than one edge incident with $u$ or $v$ (it is possible that colouring $e$ with $j$ will create a 2-cycle and that this will complete a further 2-factor).
Vertices $u$ and $v$ are joined by $\mu$ edges, and they are each joined to the other $n-2$ vertices by $\mu$ edges.  Hence the total number of coloured edges incident with $u$ or $v$ is, recalling that $e$ is not coloured, at most $\mu(2n-3)-1$. So the total number of colours that appear on at least two of these edges is at most
\[ \frac{\mu(2n-3)}{2} - \frac{1}{2}.
\]
But, by the bounds on $k$ and $m$, we have that   
\[ k \geq \frac{\mu(2n-3)}{2},
\]
and so there is at least one colour that is not used on two of the edges and this colour can be assigned to $e$.

Suppose now that the number of $2$-factors in $\A$ is $\mu (m-1) / 2$. So we need to find a colour to assign to $e$ that does not complete a further $2$-factor. Then it is enough to find a colour that appears on at most one edge incident with either $u$ or $v$ but not both. We define a partition of the $2$-factors:
\begin{itemize}
\item $F_1$ is the set of $2$-factors that contain a $2$-cycle consisting of two $uv$-edges.
\item $F_2$ is the set of $2$-factors that contain exactly one $uv$-edge.
\item $F_3$ is the set of $2$-factors that do not contain a $uv$-edge. 
\end{itemize}

So, from the set of edges incident with $u$ or $v$, for $1 \leq i \leq 3$, each $2$-factor of $F_i$ contains $i+1$ edges. Note by assumption that
\begin{equation}\label{eq:1}
\sum_{i=1}^3{|F_i|} = \frac{\mu(m-1)}{2}.
\end{equation}

We also define a partition of some colour classes that are not $2$-factors:
\begin{itemize}
\item $Q_1$ is the subset of colour classes that are not $2$-factors that contain a $uv$-edge. 
\item $Q_2$ is the subset of colour classes that are not $2$-factors that do not contain a $uv$-edge but contain at least two edges incident with $u$ or $v$.  
\end{itemize}
So, from the set of edges incident with $u$ or $v$, for $1 \leq i \leq 2$, each graph in $Q_i$ contains at least $i$ edges. 

Let 
\[ k^\ast = |F_1| + |F_2| +|F_3| +|Q_1| +|Q_2|. 
\]
Note that a colour class not in $F_1$, $F_2$, $F_3$, $Q_1$, $Q_2$ contains at most one edge incident with $u$ and $v$ and this edge is not a $uv$-edge.  Our aim is to show that there is at least one such class; that is, that $k^\ast < k$.

Considering the $\mu-1$ $uv$-edges distinct from $e$, we have
\begin{equation}\label{eq:2}
2|F_1| + |F_2| + |Q_1| \leq \mu - 1.
\end{equation}
Considering all edges incident with $u$ or $v$ gives us
\begin{eqnarray}\label{eq:main}
2|F_1| + 3|F_2| +4|F_3| +|Q_1| +2|Q_2| &\leq & \mu(2n-3) - 1 \nonumber \\
2k^\ast + |F_2| +2|F_3| -|Q_1|  &\leq & \mu(2n-3) - 1,\nonumber
\end{eqnarray}
and adding (\ref{eq:2}) gives us
\[
2k^\ast + 2|F_1|+2|F_2|+2|F_3| \leq \mu(2n-2)-2. 
\]
From (\ref{eq:1}), we obtain
\[2k^\ast + \mu(m-1) \leq \mu(2n-2)-2. 
\]
So
\[2k^\ast  \leq \mu((2n-2) - (m-1))-2<2k. 
\]
Therefore $k^\ast<k$ and there is a colour class that is in none of $F_1$, $F_2$, $F_3$ nor $Q_1$, $Q_2$ and the proof is complete.
  \end{proof}

\section{Proofs of Theorems~\ref{thm:hamilton} and \ref{thm:2factor}}\label{section:main}
  
  \noindent \textbf{Proof of Theorem~\ref{thm:hamilton}.}
  \textit{Necessity:} Suppose a strong path decomposition $\A$ of $\lambda K_n$ of size $k$ can be enclosed in a Hamiltonian decomposition of $\mu K_{n+m}$.  It may be assumed, by Corollary~\ref{cor:main}, that conditions (W1) and (W2) hold. Thus (W2)  implies $\lambda K_n$ is strongly $(n-m)$-extendible with respect to $(\A, \mu K_n)$. Thus the necessity of (M2) and (M3) follows from Propositions~\ref{prop:1} and~\ref{prop:2}. Finally (W1) implies (M1) holds. 
  
  \textit{Sufficiency}: To prove sufficiency it remains to show, by Corollary~\ref{cor:main}, that conditions (M1)--(M3) imply conditions (W1) and (W2). Trivially (W1) is implied by  (M1). By Propositions~\ref{prop:1} and~\ref{prop:2} and (M2) and (M3), $\lambda K_n$ is strongly $(n-m)$-extendible with respect to $(\A, \mu K_n)$. Let $\A'$ be the resulting partial strong path decomposition in which each colour class contains at least $n-m$ edges. Since $k \geq \mu(n-1) - 1$ whenever (i), (ii) or (iii) holds, we can repeatedly apply Lemma~\ref{lem:extension-strong} starting from $\A'$ to obtain a strong path decomposition in which each colour class contains at least $n - m$ edges. So (W2) is satisfied and the proof is complete.   
  \qed
 
 \medskip
  
 \noindent \textbf{Proof of Theorem~\ref{thm:2factor}.}
  \textit{Necessity}: Suppose a path decomposition $\A$ of $\lambda K_n$ of size $k$ can be enclosed in a $2$-factorization of $\mu K_{n+m}$. It may be assumed, by Corollary~\ref{cor:main}, that conditions (W1) and (W2) hold.   
  Thus (W2) implies $\lambda K_n$ is $(n-m)$-extendible with respect to $(\A, \mu K_n)$. The necessity of (N3) then follows from Propositions~\ref{prop:1} and~\ref{prop:3}. Also (W1) trivially implies (N1) holds. To prove the necessity of (N2), we argue by contradiction. Suppose that the number of $2$-factors in $\A$ is  $ \frac{\mu(m-1) + 2}{2}$.
   Since $\lambda K_n$ is $(n-m)$-extendible with respect to $(\A, \mu K_n)$, the colour classes that are not $2$-factors of $\lambda K_n$ must each contain at least $n-m$ edges in $\mu K_n$ and so their number cannot exceed $\frac{1}{n-m}$ of the number of edges of $\mu K_n$ not used by the $2$-factors.   
  Notice that the latter is equal to
\begin{eqnarray*}
\mu \left(\frac{n(n-1)}{2}\right) - n\left(\frac{\mu(m-1) + 2}{2}\right)
= (n-m)  \frac{\mu n}{2} - n \\
\end{eqnarray*}
while the former is equal to
\begin{eqnarray*}
k - \frac{\mu(m-1) + 2}{2}
&=& \frac{\mu n - 2}{2} \\
&>& \left((n-m)  \frac{\mu n}{2} - n\right) / (n-m)
 \end{eqnarray*}
 for $m \geq n-2$. This contradiction proves that (N2) holds.

  \textit{Sufficiency}: To prove sufficiency it remains to show, by Corollary~\ref{cor:main}, that conditions (N1)-(N3) imply conditions (W1) and (W2). Trivially (W1) is implied by  (N1). 
 Propositions~\ref{prop:1} and~\ref{prop:3} and (N3) imply that $\lambda K_n$ is $(n-m)$-extendible with respect to $(\A, \mu K_n)$. Let $\A'$ be the resulting partial path decomposition in which each colour class contains at least $n-m$ edges. If $n=3$ and $m=1$ then $\A'$ is the required path decomposition since $2k = 3 \mu$ is the number of edges in $\mu K_3$. In all other cases, (N2) allows us to repeatedly apply Lemma~\ref{lem:extension-weak} starting from $\A'$ to obtain a path decomposition in which each colour class contains at least $n-m$ edges. So (W2) is satisfied and the proof is complete. \qed

 \bibliography{bibliography}{}

\begin{thebibliography}{10}

\bibitem{Asplund0}
J.~Asplund.
\newblock 5-cycle systems of {$(\lambda+ m) K_{v+ 1} - \lambda K_v$ and
  $\lambda K_{v+ u}- \lambda K_v$}.
\newblock {\em Discrete Mathematics}, 338(5):766--783, 2015.

\bibitem{ceven}
J.~Asplund.
\newblock {$m$}-cycle packings of {$(\lambda+ \mu) K_{v+ u}-\lambda K_v :$}
  {$m$} even.
\newblock {\em arXiv}, 1511.09301, 2015.

\bibitem{asplundone}
J.~Asplund, C.~A. Rodger, and M.~S. Keranen.
\newblock Enclosings of {$\lambda$}-fold {$5$}-cycle systems: Adding one
  vertex.
\newblock {\em Journal of Combinatorial Designs}, 22(5):196--215, 2014.

\bibitem{Asplund2}
J.~Asplund, C.~A. Rodger, and M.~S. Keranen.
\newblock Enclosings of {$\lambda$}-fold 5-cycle systems for {$u=2$}.
\newblock {\em Discrete Mathematics}, 338(5):743 -- 765, 2015.

\bibitem{bathesis}
M.~A. Bahmanian.
\newblock {\em Amalgamations and Detachments of Graphs and Hypergraphs}.
\newblock PhD thesis, Auburn University, 2012.

\bibitem{bahmanian1}
M.~A. Bahmanian and C.~A. Rodger.
\newblock Multiply balanced edge colorings of multigraphs.
\newblock {\em Journal of Graph Theory}, 70(3):297--317, 2012.

\bibitem{amalgamation4}
M.~A. Bahmanian and M.~{\v{S}}ajna.
\newblock Decomposing complete equipartite multigraphs into cycles of variable
  lengths: The amalgamation-detachment approach.
\newblock {\em Journal of Combinatorial Designs}, 2014.

\bibitem{c31}
C.~J. Colbourn, R.~C. Hamm, and A.~Rosa.
\newblock Embedding, immersing, and enclosing.
\newblock {\em Congressus Numerantium}, 47:229--236, 1985.

\bibitem{diestel}
R.~Diestel.
\newblock {\em Graph Theory}.
\newblock Springer-Verlag, Heidelberg, 4th edition, 2010.

\bibitem{hilton1}
A.~J.~W. Hilton.
\newblock Hamiltonian decompositions of complete graphs.
\newblock {\em Journal of Combinatorial Theory, Series B}, 36(2):125--134,
  1984.

\bibitem{amalgamation1}
A.~J.~W. Hilton, M.~Johnson, C.~A. Rodger, and E.~B. Wantland.
\newblock Amalgamations of connected k-factorizations.
\newblock {\em Journal of Combinatorial Theory, Series B}, 88(2):267--279,
  2003.

\bibitem{cvarying}
D.~Horsley and R.~A. Hoyte.
\newblock Decomposing {$K_{u+ w}-K_u$} into cycles of various lengths.
\newblock {\em arXiv}, 1603.03908, 2016.

\bibitem{c32}
S.~P. Hurd, P.~Munson, and D.~G. Sarvate.
\newblock Minimal enclosings of triple systems {I}: adding one point.
\newblock {\em Ars Combinatoria}, 68:145--159, 2003.

\bibitem{c33}
S.~P. Hurd and D.~G. Sarvate.
\newblock Minimal enclosings of triple systems {II}: increasing the index by 1.
\newblock {\em Ars Combinatoria}, 68:263--282, 2003.

\bibitem{amalgamation2}
M.~Johnson.
\newblock Amalgamations of factorizations of complete graphs.
\newblock {\em Journal of Combinatorial Theory, Series B}, 97(4):597--611,
  2007.

\bibitem{amalgamation3}
C.~{\relax St}. J.~A. Nash-Williams.
\newblock Amalgamations of almost regular edge-colourings of simple graphs.
\newblock {\em Journal of Combinatorial Theory, Series B}, 43(3):322--342,
  1987.

\bibitem{c42}
N.~A. Newman.
\newblock 4-cycle decompositions of {$(\lambda+ m) K_{v+ u} \setminus \lambda
  K_v$}.
\newblock {\em Designs, Codes and Cryptography}, 75(2):223--235, 2015.

\bibitem{c4}
N.~A. Newman and C.~A. Rodger.
\newblock Enclosings of {$\lambda$}-fold 4-cycle systems.
\newblock {\em Designs, Codes and Cryptography}, 55(2-3):297--310, 2010.

\end{thebibliography}
\bibliographystyle{abbrv}
 
\end{document}